\newcommand{\R}{\textnormal{I\kern-0.21emR}}
\newcommand{\N}{\textnormal{I\kern-0.21emN}}
\renewcommand{\geq}{\geqslant}
\renewcommand{\leq}{\leqslant}
\newcommand{\ep}{\varepsilon}
\newtheorem{proposition}{Proposition}
\newtheorem{lemma}{Lemma}
\theoremstyle{definition}
\title{\bf Control strategies on mosquitos population for the fight against arboviruses}
\author{
	Luis Almeida\footnote{Sorbonne Universit\'e, CNRS, UPMC Univ Paris 06, UMR 7598, Laboratoire Jacques-Louis Lions, \'Equipe MAMBA, F-75005, Paris, France ({\tt luis.almeida@ljll.math.upmc.fr}).}
	\and Michel Duprez\footnote{Sorbonne Universit\'e, CNRS, UPMC Univ Paris 06, UMR 7598, Laboratoire Jacques-Louis Lions, \'Equipe-projet CAGE, F-75005, Paris, France ({\tt mduprez@math.cnrs.fr}).}
	\and Yannick Privat\footnote{IRMA, Universit\'e de Strasbourg, CNRS UMR 7501, \'Equipe TONUS, 7 rue Ren\'e Descartes, 67084 Strasbourg, France ({\tt yannick.privat@unistra.fr}).}
	\and Nicolas Vauchelet\footnote{LAGA - UMR 7539 Institut Galil\'{e}e Universit\'{e} Paris 13, 99 avenue Jean-Baptiste Cl\'{e}ment, 93430 Villetaneuse - France ({\tt vauchelet@math.univ-paris13.fr})}
}
\date{}
\begin{document}

\maketitle

\begin{abstract}
In the fight against vector-borne arboviruses, an important strategy of control of epidemic consists in controlling the population of vector, \textit{Aedes} mosquitoes in this case.
Among possible actions, two techniques consist in releasing mosquitoes to reduce the size of the population (Sterile Insect Technique) or in  replacing the wild population by a population carrying a bacteria, called \textit{Wolbachia}, blocking the transmission of viruses from mosquitoes to human.
This paper is concerned with the question of optimizing the release protocol for these two strategies with the aim of getting as close as possible to the objectives.
Starting from a mathematical model describing the dynamics of the population, we include the control function and introduce the cost functional for both \textit{population replacement} and \textit{Sterile Insect Technique} problems. Next, we establish some properties of the optimal control and illustrate them with some numerical simulations.
\end{abstract}

\noindent\textbf{Keywords:} Modelling, Optimal control, Sterile Insect technique, Wolbachia.

\medskip


\section{Introduction}

Due to the major world-wide impact of vector-borne diseases on human health, many strategies, integrating more or less the three main actors of transmission (pathogen, vectors, man) were developed to reduce their spread.
In this work, we are interested in studying strategies targeting only the vector (mosquito belonging to the genus Aedes) of viral diseases such as dengue, chikungunya, zika.
In this aim, mathematical modelling has an important role since it allows to study and design different scenarios.
In this work, we focus on two strategies : the sterile insect technique and the population replacement.

The sterile insect technique consists in a massive releasing into the wild of sterilized males to mate with females in the aim to reduce the size of the insect population. It has been first studied by R. Bushland and E. Knipling and experimented successfully in the early 1950's by nearly eradicating screw-worm fly in North America.
Since then, this technique has been studied on different pest and disease vectors \cite{Barclay,SIT}.
In particular, it is of interest for control of mosquito populations and has been modeled mathematically and studied in several papers, see e.g. \cite{Anguelov,Dumont1,Dumont2,Cai,LiYuan,Sallet,Huang,bossin,bliman3}.

Recently, there has been an increasing interest in the biology of {\itshape Wolbachia} and in its application as an agent for control of vector mosquito populations, by taking advantage of phenomena called {\itshape cytoplasmic incompatibility} (CI) and {\itshape pathogen interference} (PI) \cite{Bourtzis,Sinkins}.
In key vector species such as {\itshape Aedes aegypti}, if a male mosquito infected with {\itshape Wolbachia} mates with a non-infected female, the embryos die early in development, in the first mitotic divisions \cite{Wer.Wolbachia}. This is the {\itshape cytoplasmic incompatibility} (CI).
The {\itshape pathogen interference} (PI) is characterized by the disability for some {\itshape Wolbachia} strains to transmit viruses like dengue, chikungunya, zika viruses in {\itshape Aedes} mosquitoes \cite{Wal.wMel}.
Once released, they breed with wild mosquitoes. Over time and if releases are large and long enough, one can expect the majority of mosquitoes to carry {\itshape Wolbachia}, thanks to CI. Due to PI, the mosquito population then has a reduced vector competence, decreasing the risk of dengue, chikungunya, and zika outbreaks.
The technique consisting of releasing mosquitoes carrying {\itshape Wolbachia} to replace the wild population is called population replacement.
It has been modeled and studied in several works, see e.g. \cite{Farkas,Fenton,Schraiber,Hughes,bliman1,PhD}.

In this paper, we are interested in the study of the optimization of the release protocol. More precisely, given a duration of the experiment and an amount of mosquitoes, what should be the temporal distribution of releases to be as close as possible to the objective to be reached at the final time of the experiment ?
To answer to this question, we first define a cost functional which will represent mathematically the objective we seek to attain.
For the sterile insect technique, the goal being to decrease the size of the population, the quantity to minimize will be defined as the number of females at the final time. For the population replacement with \textit{Wolbachia}, the quantity to minimize will be the distance (in the least squares sense) at final time to the infected equilibrium, corresponding to the state where all the mosquitoes carry the bacterium \textit{Wolbachia} (the entire population is infected).
Obviously, this optimal problem should satisfy the constraint that the number of mosquitoes released during the experiment is bounded.
Similar optimization problems for sterile insect or population replacement techniques, with different cost functionals, have been proposed in e.g. \cite{Thome,colombien,bliman2}.
Compared to previous work, the main difference here is due to the fact that we only consider the state at the final time, which seems natural but induces several technical difficulties.

The outline of the paper is the following. In the next section, we describe the mathematical modelling for the two strategies. Starting from a model incorporating the whole mosquito life cycle, we use several assumptions to simplify this system and arrive to two simple systems modeling the two techniques studied in this article. In Section \ref{sec:optim}, we introduce the cost functionals and describe the optimization problems to be solved and give existence results and some properties of the optimal control. These results are illustrated in Section \ref{sec:num} where some numerical simulations are provided. Then, a conclusion and a discussion of our results conclude this work. Finally, an Appendix is devoted to some technical proofs, in particular the proofs of existence of an optimal control are provided in this Appendix.

\section{Mathematical modelling}

\subsection{Mosquito life cycle}

The life cycle of a mosquito (male or female) occurs successively in two distinct environments:
it includes an aquatic phase (egg, larva, pupa), and an adult aerial phase.
A few days after being fertilized, a female mosquito may deposit a few dozen eggs possibly divided between several breeding sites.
Once deposited, the eggs of some species can resist up to several months and also to adverse weather conditions before hatching. This characteristic contributes to the adaptability of mosquitoes, and allowed them to colonize temperate regions.
After stimulation (e.g. the rain), the eggs hatch to give rise to larvae that will develop in the water and reach the state of pupae. This larval phase can last from a few days to a few weeks. Then the insect makes its metamorphosis. The pupa (also called {\it nymph}) stays in the aquatic state for 1 to 3 days and then becomes an adult mosquito (or {\it imago}): this is the emergence, and then the beginning of the aerial phase. Roughly speaking, the lifespan of an adult mosquito is estimated to be a few weeks.

In many species, oviposition is possible only after a blood meal, that is, the female must bite a vertebrate before each egg-laying. This behavior, called hematophagy, can be exploited by infectious agents (such as bacteria, viruses or parasites) to spread, passing alternately between a vertebrate host (man, for what interests us here) and an arthropod host (here, the mosquito).

In order to model this life cycle dynamics, we introduce the following quantities:
\begin{itemize}
\item $E(t)$ density of eggs at time $t$;
\item $L(t)$ larvae density at time $t$;
\item $P(t)$ pupa density at time $t$;
\item $F(t)$ and $M(t)$ density of adult females and males, respectively, at time $t$.
\end{itemize}
Let us consider the parameters:
\begin{itemize}
\item $\beta_E>0$ is oviposition rate for females;
\item $\delta_E, \delta_L, \delta_P, \delta_F, \delta_M >0$ are death rates for eggs, larvae, pupa, adult females, and males, respectively;
\item $\tau_E$ hatching rate for eggs;
\item $\nu$ the probability that a pupa gives rise to a female, therefore $(1-\nu)$ is the probability to give rise to a male ($0<\nu<1$);
\item $\tau_L$ and $\tau_P >0$ transition rates from larval phase to pupa and from pupa to adult;
\item intraspecific competition is supposed to occur only at the aquatic phase.
  This models on the one hand the occupation of the breeding sites that can only accommodate a limited number of eggs and, on the other hand, limited access to resources for the larvae.
  In the larval compartment, this competition is described by the introduction of a positive constant denoted $c$ and is supposed to depend on the concentration of the larvae: the greater the number of larvae is, the more the competition to find the essential nutrients for larval maturation is important.
  The environmental capacity for eggs is denoted $K$. This amount can be interpreted as the maximum density of eggs that females can lay in breeding sites.
\end{itemize}

From the above considerations, we can determine the dynamics of the mosquito population and obtain the following dynamical system
\begin{equation}
  \left\{
    \begin{aligned}
\frac{d}{dt} E &= \beta_E F \left(1-\frac{E}{K}\right) - E \big( \tau_E + \delta_E \big),  \\
\frac{d}{dt} L &= \tau_E E - L \big( c L + \tau_L  + \delta_L \big),  \\
\frac{d}{dt} P &= \tau_L L - (\delta_P + \tau_P) P,  \\
\frac{d}{dt} F &= \nu \tau_P P - \delta_F F, \\
\frac{d}{dt} M &= (1-\nu)\tau_P P - \delta_M M.
\end{aligned}
  \right.
  \label{eq:S1}
\end{equation}
  
It is important to notice that this system is appropriate only for a large enough number of mosquitoes, since in this model it is assumed that a female will mate with a male with a probability equal to $1$.
This assumption seems reasonable for high number of mosquitoes and is done in several models \cite{Yang}.
In more generality, one may consider that the rate $\beta_E$ depends on $M$ as a function $\beta_E(M)$, which complexifies the study performed here.
We refer to \cite{bossin,postdoc} and references therein for examples of such function $\beta_E$.
In order to further simplify this system of ODE, we assume that the time dynamics of the pupa compartment is fast. Then, denoting $\tilde{t}=\ep t$ a new time variable, and $\tilde{P}(\tilde{t})=P(t)$, we have
$$
\frac{d}{d\tilde{t}} \tilde{P} = \ep \frac{d}{dt} P = \tau_L L - (\delta_P + \tau_P) P.
$$
As $\ep \to 0$, we deduce that we may replace the third equation in system \eqref{eq:S1} by
$$
0 = \tau_L L - \delta_P P - \tau_P P,
$$
which implies the relation
$P=\frac{\tau_L}{\delta_P+\tau_P} L$.

To reduce further this system of equations, we will use some assumptions on the larval compartment. We first consider that the competition at the larvae stage is negligible (i.e. $c\ll 1$). Moreover, in favorable conditions, the larval stage may be really fast. Then, by the same token as above, this compartment may be considered at equilibrium leading to the relation
$$
\tau_E E = (\tau_L + \delta_L) L.
$$
Injecting this relation, system \eqref{eq:S1} reduces to
\begin{equation}
  \left\{
      \begin{aligned}
\frac{d}{dt} E &= \beta_E F\left(1-\frac{E}{K}\right) - (\tau_E+\delta_E) E,  \\
\frac{d}{dt} F &= \nu \beta_F E - \delta_F F, \\
\frac{d}{dt} M &= (1-\nu) \beta_F E - \delta_M M, 
\end{aligned}
  \right.
  \label{eq:S2}
\end{equation}
where we use the notation $\beta_F =\frac{\tau_P\tau_L\tau_E}{(\delta_P+\tau_P)(\tau_L+\delta_L)}$.

\subsection{Sterile insect technique}

As explained above, the sterile insect technique consists in releasing sterile males to mate with females with the aim of reducing the size of the population. We denote by $M_s$ the density of sterile males.
Only females mating fertile males will be able to lay eggs.
Assuming a uniform repartition of the population of mosquitos, the probability that a female mates with a fertile male is given by $\frac{M}{M+\gamma M_s}$.
The parameter $\gamma$ account for the fact that female may have a preference for fertile male.
Introducing the sterile male population into system \eqref{eq:S2} leads to
\begin{equation}
  \left\{
  \begin{aligned}
\frac{d}{dt} E &= \beta_E F\left(1-\frac{E}{K}\right)\frac{M}{M+\gamma M_s} - (\tau_E+\delta_E) E,  \\
\frac{d}{dt} F &= \nu \beta_F E - \delta_F F, \\
\frac{d}{dt} M &= (1-\nu) \beta_F E - \delta_M M, \\
\frac{d}{dt} M_s &= - \delta_s M_s.
  \end{aligned}
  \right.
  \label{eq:sterile1}
\end{equation}
It is clear that the extinction state, where $E=F=M=M_s=0$ is a steady state. However, an important observation for this system is that this steady state cannot be reached. Indeed, under suitable assumptions on the parameters, it is unstable as stated in the following proposition (whose proof is postponed in the Appendix).
\begin{proposition}\label{prop:sanseffetAllee}
Assume that 
\begin{equation}\label{eq:cond prop}
\delta_s>\delta_M\qquad \text{and}\qquad
\nu\beta_E \beta_F > \delta_F(\tau_E+\delta_E).
\end{equation}
Then the steady state $(0,0,0,0)$ for system \eqref{eq:sterile1} is unstable.
\end{proposition}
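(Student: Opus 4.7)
The plan is to exploit the fact that the hyperplane $\{M_s=0\}$ is invariant under \eqref{eq:sterile1} and to linearize the restricted three-dimensional dynamics there. Indeed, from $\dot M_s=-\delta_s M_s$, any initial condition with $M_s(0)=0$ yields $M_s(t)\equiv 0$ for all $t\geq 0$. On this invariant set the factor $M/(M+\gamma M_s)$ equals $1$ whenever $M>0$, so \eqref{eq:sterile1} reduces precisely to \eqref{eq:S2} in the variables $(E,F,M)$. Consequently, proving instability of the origin for \eqref{eq:S2} immediately gives instability of $(0,0,0,0)$ for \eqref{eq:sterile1}, since the corresponding escaping trajectories can be used as arbitrarily small perturbations whose sterile-male component vanishes.

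The computation on the reduced system is elementary: the Jacobian of \eqref{eq:S2} at $(0,0,0)$ reads
\[
J=\begin{pmatrix} -(\tau_E+\delta_E) & \beta_E & 0 \\ \nu\beta_F & -\delta_F & 0 \\ (1-\nu)\beta_F & 0 & -\delta_M\end{pmatrix}.
\]
The $M$-component decouples with eigenvalue $-\delta_M$, while the $(E,F)$-block has trace $-(\tau_E+\delta_E+\delta_F)<0$ and determinant $\delta_F(\tau_E+\delta_E)-\nu\beta_E\beta_F$, which is strictly negative under the second inequality of \eqref{eq:cond prop}. Its two eigenvalues are therefore real and of opposite signs, so $J$ admits a strictly positive eigenvalue. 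Hartman--Grobman (or an explicit construction of a one-dimensional unstable manifold along the associated eigenvector) then yields Lyapunov instability of the origin for \eqref{eq:S2}, and lifting through the invariant subspace argument finishes the proof.

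The main subtlety is that the term $M/(M+\gamma M_s)$ is not $C^1$, nor even continuous, at $(M,M_s)=(0,0)$, which forbids a naive linearization of the full four-dimensional system. Restricting to the invariant slice $\{M_s=0\}$ bypasses this singularity since the factor is identically $1$ there. Note that the hypothesis $\delta_s>\delta_M$ plays no explicit role in the argument above; it can however be used, via the lower bound $M(t)\geq M(0)\,e^{-\delta_M t}$ combined with $M_s(t)=M_s(0)\,e^{-\delta_s t}$, to dominate $M/(M+\gamma M_s)$ from below uniformly in time for initial data satisfying $M(0)>0$, and thus to extend the instability to trajectories with $M_{s,0}>0$ should one wish to strengthen the statement.
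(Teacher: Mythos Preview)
Your argument is correct and takes a genuinely different route from the paper. The paper argues by contradiction: it uses the first hypothesis $\delta_s>\delta_M$ together with $M(t)\geq M(0)e^{-\delta_M t}$ and $M_s(t)=M_s(0)e^{-\delta_s t}$ to obtain $\gamma M_s(t)<\varepsilon M(t)$ for $t\geq t^*$, then replaces the singular factor by the lower bound $1/(1+\varepsilon)$ and compares with the resulting monotone subsystem, whose Jacobian at the origin is shown to have a positive eigenvalue. Your approach instead restricts to the invariant slice $\{M_s=0\}$, where the right-hand side coincides with the smooth system \eqref{eq:S2}, and linearizes there directly. This is shorter, avoids the comparison machinery, and---as you correctly observe---shows that the condition $\delta_s>\delta_M$ is in fact superfluous for Lyapunov instability. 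One minor point worth making explicit: the unstable eigenvector of your Jacobian $J$ can be chosen with all components strictly positive (this follows from the block structure and $\lambda_+>0$), so the escaping trajectories have $M(t)>0$ throughout and the identification with \eqref{eq:sterile1} on $\{M_s=0\}$ is unambiguous despite the $0/0$ at the origin. What the paper's approach buys, on the other hand, is that its comparison argument applies to initial data with $M_s(0)>0$ as well, which is closer to the physically relevant regime; your final paragraph already indicates how to recover this.
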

The first assumption in \eqref{eq:cond prop} traduces the fact that the death rate for the released sterile mosquitoes is higher than for wild ones.
The second assumption implies that the oviposition rate is high enough.
Notice that with values taken from the field, these assumptions are satisfied (see Section \ref{sec:num}).

Due to the high number of equations in system \eqref{eq:sterile1}, we will reduce this system by making the following assumption : 
The death rate for males and females is the same ($\delta_F=\delta_M$) and the probability that a pupa emerges to a female or a male is the same ($\nu=\frac 12$). Thanks to this assumption, males and females densities satisfy the same equation. Hence assuming that initially these quantities are equals, we will assume that $F=M$.

Finally, system \eqref{eq:sterile1} reduces to
\begin{equation}
  \left\{
  \begin{aligned}
\frac{d}{dt} E &= \beta_E F\left(1-\frac{E}{K}\right)\frac{F}{F+\gamma M_s} - (\tau_E+\delta_E) E,  \\
\frac{d}{dt} F &= \nu\beta_F E - \delta_F F, \\
\frac{d}{dt} M_s &= - \delta_S M_s.
  \end{aligned}
  \right.
  \label{eq:sterile}
\end{equation}

\subsection{Introduction of the bacteria {\it Wolbachia}}

To model the strategy consisting of releasing {\it Wolbachia} infected mosquitoes to replace the wild population, we introduce the infected population into \eqref{eq:S2}.
Let us denote $E_i$, $F_i$, $M_i$ the eggs, females and males compartments infected by \textit{Wolbachia} and $E_u$, $F_u$, $M_u$ correspond to the uninfected compartments.

Assuming an uniform repartition of the population of mosquitoes, then the probability for a female to mate with an infected male is equal to the proportion of infected males into the population, i.e. $\frac{M_i}{M_u+M_i}$.
Similarly, the probability to mate with a uninfected male is $\frac{M_u}{M_u+M_i}$.
To model the \textit{cytoplasmic incompatibility}, we introduce
a parameter denoted $s_h$, corresponding to the fraction of uninfected females eggs fertilized by infected males which will not hatch.
We have $0<s_h\leq 1$, the case $s_h=1$ correspond to the perfect cytoplasmic incompatibility.
From system \eqref{eq:S2}, we construct the following system taking into account infected and uninfected mosquitos:
\begin{equation}
\left\{
\begin{aligned}
\frac{d}{dt} E_u &= \beta_E F_u\left(\frac{M_u}{M_u+M_i}+(1-s_h)\frac{M_i}{M_u+M_i}\right)\left(1-\frac{E_u+E_i}{K}\right) - (\tau_E+\delta_E) E_u,  \\
\frac{d}{dt} F_u &= \nu \beta_F E_u - \delta_{F} F_u, \\
\frac{d}{dt} M_u &= (1-\nu) \beta_F E_u - \delta_{M} M_u, \\
\frac{d}{dt} E_i &= \eta \beta_E F_i\left(1-\frac{E_u+E_i}{K}\right) - (\tau_E+\delta_E) E_i,  \\
\frac{d}{dt} F_i &= \nu \beta_F E_i - \delta \delta_{F} F_i, \\
\frac{d}{dt} M_i &= (1-\nu) \beta_F E_i - \delta \delta_{M} M_i.
\end{aligned}
\right.
\label{eq:S3}
\end{equation}
In this system, we have introduced the two following parameters:
$\eta<1$ modelling the fecundity reduction of infected females with respect to uninfected females,
$\delta>1$ modelling the increase of mortality for infected mosquitoes.

As above, for the sterile insect technique, we make use of the same set of assumptions ($\nu=\frac 12$ and $\delta_F=\delta_M$) to reduce the system by considering that the quantity of males and females is the same : $M_u=F_u$ and $M_i=F_i$.

Under these assumptions, system \eqref{eq:S3} for the {\it Wolbachia} strategy reduces to
\begin{equation}
  \left\{
      \begin{aligned}
\frac{d}{dt} E_u &= \beta_E F_u \left(1-s_h \frac{F_i}{F_u+F_i}\right)\left(1-\frac{E_u+E_i}{K}\right) - \big(\tau_E+\delta_E\big) E_u,  \\
\frac{d}{dt} F_u &= \nu \beta_F E_u - \delta_{F} F_u, \\
\frac{d}{dt} E_i &= \eta \beta_E F_i\left(1-\frac{E_u+E_i}{K}\right) - \big(\tau_E+\delta_E\big) E_i,  \\
\frac{d}{dt} F_i &= \nu \beta_F E_i - \delta \delta_{F} F_i.
\end{aligned}
    \right.
    \label{eq:S4}
\end{equation}

\section{Towards optimisation problems}
\label{sec:optim}

We introduce in this section the optimisation problems considered in this work.
Since for both strategies, the idea consists in releasing mosquitos (sterile male or infected by Wolbachia), the control is about the release function which will be denoted $u$.
We assume that the release occurs in a time interval $[0,T]$ for $T>0$ given.
Obviously some constraints should be satisfied by the release function.
We assume that there exists $C\geq 0$ and $\overline{U}\geq 0$ such that
$0\leq u\leq \overline{U}$ a.e. and $\int_0^T u(t)dt \leq C$.
The first bound means that $u$, the instantaneous rate of mosquito release (number of mosquitoes per unit of time) is bounded by a constant $\overline{U}$ all along the period $[0,T]$; the second means that the total number of mosquitoes released is bounded by another $C$. Both assumptions are natural considering that one cannot produce an infinite number of mosquitoes to release nor release them at an infinite rate.

Before the beginning of the experiment the systems are assumed to be at equilibrium.
Hence for each system we first determine the equilibria.
We present successively the optimisation problem considered for the sterile insect technique and for the population replacement by {\itshape{Wolbachia}}.

\subsection{Sterile insect technique}

Let us first consider the system \eqref{eq:sterile} for the sterile insect technique. The following lemma gives the equilibria.
\begin{lemma}
  Under the assumption \eqref{eq:cond prop}, there are two equilibria for system \eqref{eq:sterile}:
  the extinction equilibria given by $(E_1^*,F_1^*,M_s^*)=(0,0,0)$,
  and the non-extinction equilibria $(E_2^*,F_2^*,M_s^*)=\left(\bar E_2, \frac{\nu \beta_F}{\delta_F}\bar E_2,0\right)$ with $\bar E_2=K\left(1-\frac{(\tau_E+\delta_E)\delta_F}{\nu \beta_E \beta_F}\right)$.
  Moreover, the non-extinction equilibrium is linearly asymptotically stable.
\end{lemma}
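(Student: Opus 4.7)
The plan is to determine the equilibria by solving the stationary system and then linearize around the non-extinction one.

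First I would set the right-hand side of \eqref{eq:sterile} to zero. The third equation forces $\delta_S M_s^* = 0$, hence $M_s^* = 0$. The second equation then gives $F^* = \tfrac{\nu\beta_F}{\delta_F}E^*$. Substituting in the first equation (where the ratio $F/(F+\gamma M_s)$ equals $1$ when $M_s=0$ and $F>0$), one obtains either $E^* = 0$ (the extinction equilibrium) or, after dividing by $E^*$,
\[
1 - \frac{E^*}{K} \;=\; \frac{(\tau_E+\delta_E)\delta_F}{\nu\beta_E\beta_F},
\]
which yields $E^* = \bar E_2$. Note that the second condition in \eqref{eq:cond prop}, $\nu\beta_E\beta_F > \delta_F(\tau_E+\delta_E)$, is precisely what guarantees $\bar E_2 > 0$, so that the second equilibrium is biologically admissible.

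For stability at the point $(\bar E_2,\bar F_2,0)$ with $\bar F_2 = \tfrac{\nu\beta_F}{\delta_F}\bar E_2 > 0$, the key observation is that the vector field is smooth near this equilibrium (the singular behaviour of $F/(F+\gamma M_s)$ only occurs at $F=M_s=0$). I would compute the Jacobian and exploit its block-triangular structure: since $\dot M_s = -\delta_S M_s$ decouples, the Jacobian reads
\[
J \;=\; \begin{pmatrix} -\dfrac{\beta_E \bar F_2}{K} - (\tau_E+\delta_E) & \beta_E\!\left(1-\dfrac{\bar E_2}{K}\right) & -\gamma\beta_E\!\left(1-\dfrac{\bar E_2}{K}\right) \\[6pt] \nu\beta_F & -\delta_F & 0 \\[4pt] 0 & 0 & -\delta_S \end{pmatrix}.
\]
One eigenvalue is $-\delta_S < 0$, and the remaining two come from the upper-left $2\times 2$ block. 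Its trace is clearly negative, and substituting the equilibrium relation $1-\bar E_2/K = (\tau_E+\delta_E)\delta_F/(\nu\beta_E\beta_F)$ into the determinant produces cancellation, leaving $\beta_E\nu\beta_F\bar E_2/K > 0$. Negative trace and positive determinant give two eigenvalues with negative real part, so linear asymptotic stability follows.

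I do not expect any real obstacle: the only subtle point is the non-smoothness of the mating ratio at $F=M_s=0$, but it is a non-issue at the non-extinction equilibrium since $\bar F_2>0$. The rest is elementary algebra, with the only delicate check being the sign of the determinant of the $2\times 2$ block, where one must use the equilibrium identity to see the cancellation.
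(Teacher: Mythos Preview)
Your argument is correct. The paper itself does not supply a proof of this lemma: it states the two equilibria and the stability claim and moves on, treating the result as a direct computation. Your approach---solving the stationary system, noting that $M_s^*=0$ forces the block-triangular Jacobian, and then checking trace and determinant of the $2\times 2$ block using the equilibrium identity $1-\bar E_2/K=(\tau_E+\delta_E)\delta_F/(\nu\beta_E\beta_F)$---is exactly the standard verification one would expect here, and all your sign computations check out (in particular the determinant indeed collapses to $\beta_E\nu\beta_F\bar E_2/K>0$). There is nothing to compare against beyond this.
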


Let us denote $u$ the release function of sterile male mosquitoes.
Then system \eqref{eq:sterile} reads
\begin{equation}
  \left\{
  \begin{aligned}
&\frac{d}{dt} E = \beta_E F\left(1-\frac{E}{K}\right)\frac{F}{F+\gamma M_s} - (\tau_E+\delta_E) E,  \\
&\frac{d}{dt} F = \nu \beta_F E - \delta_F F, \\
&\frac{d}{dt} M_s = u - \delta_s M_s,  \\
& E(0) = E_2^*, \qquad F(0) = F_2^*, \qquad M_s(0)=0.
  \end{aligned}
  \right.
  \label{pb:sterile}
\end{equation}
In our minimisation problem, we want to find the release function $u$ under the above mentionned physical constraints for which the solution of the final time is the closest as possible of the extinction equilibrium.
More precisely, let us introduce the cost functional
\begin{equation*}
J(u) = \frac 12 \left(E(T)^2 + F(T)^2\right).
\end{equation*}
We want to solve the problem
\begin{equation}\label{pb_TIS}
\min_{u\in {\mathcal U}_{C,\overline{U}}} J(u), \qquad
{\mathcal U}_{C,\overline{U}} = \left\{0\leq u\leq \overline{U}, \quad \int_0^T u(t)dt \leq C\right\}.
\end{equation}

The following result, whose proof is postponed in the Appendix, gives the existence of a solution to this problem.
\begin{proposition}\label{prop:existsterile}
Under the assumption \eqref{eq:cond prop}, problem \eqref{pb_TIS} has a solution $u^*$. Moreover, assuming that
\begin{equation}\label{cond:barUT}
\overline{U}T>C,
\end{equation}
the optimal control strategy uses the maximal amount of mosquitoes, in other words  
$$
\int_0^T u^*(t)\, dt=C
$$
and there exists $T_0\in (0,T)$ such that $u^*=0$ on $(T_0,T)$. 
\end{proposition}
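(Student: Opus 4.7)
The existence part would be the direct method. The admissible set $\mathcal{U}_{C,\overline{U}}$ is convex, bounded and weakly-$*$ closed in $L^\infty(0,T)$, hence weakly-$*$ compact. From any minimising sequence $u_n$ I would extract a weakly-$*$ convergent subsequence with limit $u^\star$; using the Duhamel formula $M_{s,u}(t)=\int_0^t e^{-\delta_s(t-s)}u(s)\,ds$, the map $u\mapsto M_{s,u}$ is weakly-$*$ to uniformly continuous, so $M_{s,u_n}\to M_{s,u^\star}$ uniformly on $[0,T]$. Standard Cauchy--Lipschitz stability applied to the first two equations of \eqref{pb:sterile} then gives uniform convergence of $E_{u_n},F_{u_n}$, and continuity of $J$ yields $J(u_n)\to J(u^\star)$, so $u^\star$ is optimal.

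For the structural statements I would write Pontryagin's first-order conditions. Introducing adjoint states $(p_E,p_F,p_M)$ solving the linearised adjoint system backward in time with terminal values $p_E(T)=E(T)$, $p_F(T)=F(T)$, $p_M(T)=0$, the Gâteaux derivative of $J$ at $u$ in direction $h$ equals $\int_0^T p_M(t)h(t)\,dt$. With a KKT multiplier $\lambda\geq 0$ for the integral constraint, the optimality condition reads
\begin{equation*}
u^\star(t)=\overline{U}\ \text{ if }\ p_M(t)+\lambda<0,\qquad u^\star(t)=0\ \text{ if }\ p_M(t)+\lambda>0,\qquad \lambda\Bigl(\textstyle\int_0^T u^\star-C\Bigr)=0.
\end{equation*}

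The decisive step is to prove $p_M<0$ on $[0,T)$. A direct calculation gives
\begin{equation*}
\dot{p}_M=\alpha(t)p_E+\delta_s p_M,\qquad \alpha(t)=\gamma\beta_E\frac{F^2(1-E/K)}{(F+\gamma M_s)^2}\geq 0,
\end{equation*}
so backward integration from $p_M(T)=0$ yields $p_M(t)=-e^{\delta_s t}\int_t^T e^{-\delta_s s}\alpha(s)p_E(s)\,ds$, and it suffices to prove $p_E>0$ on $[0,T]$. Reversing time in the system for $(p_E,p_F)$, one obtains a linear ODE whose coefficient matrix has off-diagonal entries $\nu\beta_F>0$ and $\beta_E(1-E/K)F(F+2\gamma M_s)/(F+\gamma M_s)^2>0$, that is, a Metzler matrix. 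Starting from the strictly positive terminal data $E(T),F(T)$, explicit lower bounds drawn from the scalar inequalities $\dot q_E+A q_E\geq \nu\beta_F q_F\geq 0$ and $\dot q_F+\delta_F q_F\geq 0$ force $p_E,p_F>0$ on $[0,T]$, and hence $p_M<0$ on $[0,T)$.

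Both claims then follow. If $\int_0^T u^\star<C$, complementary slackness forces $\lambda=0$; the switching rule then gives $u^\star=\overline{U}$ a.e., whence $\int u^\star=\overline{U}T>C$ by \eqref{cond:barUT}, a contradiction, so $\int_0^T u^\star=C$. Since $C<\overline{U}T$ prevents $u^\star\equiv\overline{U}$, the switching rule forces $\lambda>0$ strictly; by continuity of $p_M$ together with $p_M(T)=0$, the function $p_M+\lambda$ is strictly positive on some interval $(T_0,T)$, which yields $u^\star\equiv 0$ there. The main obstacle is the sign analysis of the adjoint $(p_E,p_F)$: the forward linearised system is not cooperative, and only after the time reversal does one recognise the Metzler structure that makes the monotonicity argument work; a secondary but necessary ingredient is the invariance $\{0<E<K,\ F>0\}$ along the trajectory, needed to ensure the non-degeneracy of the forcing $\alpha$ and the strict positivity of the terminal data $E(T),F(T)$.
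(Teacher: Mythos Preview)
Your proposal is correct and follows the same overall architecture as the paper: direct method for existence via weak-$*$ compactness, Pontryagin's principle with the same terminal data $(p_E(T),p_F(T),p_M(T))=(E(T),F(T),0)$ and multiplier $\lambda\geq 0$, reduction of $p_M<0$ on $[0,T)$ to $p_E>0$ via the integral formula for $p_M$, and the same contradiction/continuity arguments for the two structural claims.

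The one genuine difference is how you establish $p_E>0$. The paper does \emph{not} use the cooperative structure. It sets $v(t)=p_1^*(t)\exp\bigl(\int^t A\bigr)$ with $A=\partial_E f_E-(\tau_E+\delta_E)$, derives a second-order scalar ODE $(e^{-\int^t(\delta_F+A)}v')'=\nu\beta_F\,\partial_F f_E\,e^{-\int^t(\delta_F+A)}v$, performs a change of independent variable to reduce this to $w''=c(s)w$ with $c>0$, and argues from $w(\tilde T)>0$, $w'(\tilde T)<0$ and convexity that $w$ cannot vanish. Your route---reverse time and observe that the $(p_E,p_F)$ block becomes a linear system with Metzler matrix (off-diagonals $\nu\beta_F>0$ and $\partial_F f_E>0$), so the open positive quadrant is forward-invariant from the strictly positive initial data $(E(T),F(T))$---is more direct and transparent; it also makes clear exactly which sign information on $\partial_E f_E$, $\partial_F f_E$, $\partial_{M_s} f_E$ is actually used. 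The paper's approach, on the other hand, packages everything into a single scalar equation and a convexity argument, which is self-contained and avoids appealing to cooperative-systems folklore. Either way, the prerequisite bounds $0<E<K$, $F>0$ (your ``secondary ingredient'') are exactly the content of the paper's Lemma~\ref{lem:1403}. One small point: your phrasing of the Metzler step via the two decoupled scalar inequalities is slightly circular as written (each inequality presupposes the positivity of the other variable); the clean version is the standard first-hitting-time argument for cooperative linear systems, which you clearly have in mind.
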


\subsection{Population replacement}

Let us consider the reduced model \eqref{eq:S4} for the introduction of the bacteria {\it Wolbachia}. The following Lemma gives the equilibria for this system:
\begin{lemma}\label{lem:eqWol}
Let us consider that $1<\delta$, $\eta<1$, $0<s_h\leq 1$. We denote $\displaystyle b=\frac{\nu\beta_F\beta_E}{(\tau_E+\delta_E)}$. Assume moreover that 
\begin{equation}\label{hyp:coeffW}
\eta b > \delta \delta_F, \qquad \frac{\eta}{\delta^2} < K\left(1-\frac{\delta\delta_F}{\eta b}\right) < \frac{\eta}{\delta(1-s_h)}.
\end{equation}
Then there are four distinct nonnegative equilibria:
\begin{itemize}
\item \textit{Wolbachia} invasion 
$\displaystyle (E^*_{uW}, F^*_{uW}, E^*_{iW}, F^*_{iW}) := \left(0,0,K\Big(1-\frac{\delta\delta_F}{b\eta}\Big),K\Big(\frac{\nu\beta_F}{\delta\delta_F}-\frac{\nu\beta_F}{b\eta}\Big)\right)$
is stable;
\item \textit{Wolbachia} extinction
$\displaystyle (E^*_{uE}, F^*_{uE}, E^*_{iE}, F^*_{iE}) := \left(K\Big(1-\frac{\delta_F}{b}\Big),K\Big(\frac{\nu\beta_F}{\delta_F}-\frac{\nu\beta_F}{b}\Big),0,0\right)$ 
is stable;
\item co-existence steady state 
$\displaystyle (E^*_{uC}, F^*_{uC}, E^*_{iC}, F^*_{iC})$ is unstable, with
$F^*_{uC}=\frac{\nu\beta_F}{\delta_F}E^*_{uC}$, $F^*_{iC}=\frac{\nu\beta_F}{\delta\delta_F}E^*_{iC}$ and
$$
E^*_{uC}:= \frac{1}{s_h+\delta-1}\left(\frac{\eta}{\delta}-(1-s_h)K\left(1-\frac{\delta\delta_F}{b\eta}\right)\right), \quad 
E^*_{iC}:=\frac{1}{s_h+\delta-1}\left(\delta K\left(1-\frac{\delta\delta_F}{b\eta}\right)-\frac{\eta}{\delta}\right);
$$
\item extinction $(0,0,0,0)$ is unstable.
\end{itemize}
\end{lemma}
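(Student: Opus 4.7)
The proof follows the usual two-stage approach for autonomous ODE systems: first identify all equilibria algebraically, then study their local stability through the Jacobian.

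\textbf{Equilibria.} Setting the right-hand side of \eqref{eq:S4} to zero, the second and fourth equations give the proportionalities $F_u=\frac{\nu\beta_F}{\delta_F}E_u$ and $F_i=\frac{\nu\beta_F}{\delta\delta_F}E_i$. Substituting these into the first and third equations reduces the problem to an algebraic system in $(E_u,E_i)$, and a case distinction on the vanishing of $E_u$ and $E_i$ yields four candidates. The origin is extinction. When exactly one of them vanishes, the other satisfies a logistic-type scalar equation whose positive root is precisely the invasion or the extinction of \textit{Wolbachia} announced in the statement; positivity follows from $\eta b>\delta\delta_F$ (which also yields $b>\delta_F$ since $\eta<1<\delta$). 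When neither vanishes, dividing the third equation by $E_i$ produces $E_u+E_i=K(1-\delta\delta_F/(\eta b))$, and injecting this into the first equation (divided by $E_u$) collapses it to $s_h F_i/(F_u+F_i)=1-\eta/\delta$. Combined with the ratio $F_u/F_i=\delta E_u/E_i$ coming from the proportionalities, this gives a linear equation between $E_u$ and $E_i$; solved jointly with the sum constraint, it produces the announced closed form for the coexistence equilibrium. The two-sided bound in \eqref{hyp:coeffW} is exactly what is needed to force both $E^*_{uC}$ and $E^*_{iC}$ to be strictly positive, and distinctness of the four equilibria then follows.

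\textbf{Stability of the three ``degenerate'' equilibria.} For each equilibrium I would compute the Jacobian $J$ of the vector field. The key observation is that whenever $F_u^*=0$ (resp.\ $F_i^*=0$), all partial derivatives of the first (resp.\ third) equation with respect to $(E_i,F_i)$ (resp.\ $(E_u,F_u)$) vanish at the equilibrium, because the relevant product-rule contributions carry a factor of $F_u$ (resp.\ $F_i$). Consequently $J$ is block-triangular at those points and its characteristic polynomial splits into two quadratics, analysed via trace and determinant. At the \textit{Wolbachia} invasion equilibrium, both blocks have negative trace; a direct computation gives an infected block of positive determinant and an uninfected block of determinant $\rho\delta_F(1-(1-s_h)\delta/\eta)$, positive under \eqref{hyp:coeffW}. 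At the \textit{Wolbachia} extinction equilibrium, a mirror analysis yields an infected block of determinant $\rho\delta_F(\delta-\eta)>0$ (since $\delta>1>\eta$) and an analogous uninfected block, giving stability. For the extinction point, the plane $\{E_u=F_u=0\}$ is invariant, and the linearization of the restricted planar system at the origin has determinant $\rho(\delta\delta_F-\eta b)<0$ by \eqref{hyp:coeffW}, exposing a positive eigenvalue and hence instability.

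\textbf{Main obstacle.} The coexistence equilibrium is the most delicate case: both $F_u^*$ and $F_i^*$ are strictly positive, so the block-triangular structure is lost and the full $4\times 4$ Jacobian must be handled. My plan is to evaluate $\det(J)$ using the closed-form expressions of $(E^*_{uC},E^*_{iC})$, factoring systematically through the common denominator $s_h+\delta-1$ and through the identity $1-(E^*_{uC}+E^*_{iC})/K=\delta\delta_F/(\eta b)$, and to show that this determinant is strictly negative. Since complex eigenvalues appear in conjugate pairs, $\det(J)<0$ forces at least one real positive eigenvalue, yielding the claimed instability. Should the algebra prove too heavy, a geometric alternative is available: from the bistable phase portrait the coexistence point is forced to lie on the separatrix between the basins of the two stable equilibria, so the direction transverse to its stable manifold must be unstable.
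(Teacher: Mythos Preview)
The paper states this lemma without proof; no argument for it appears anywhere in the text or the appendix. Your sketch is therefore not comparable to anything the authors wrote, and it is already substantially more than they provide.

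Your overall strategy is the natural one and the equilibrium computations are correct. Two points need attention. First, at the \textit{Wolbachia} invasion equilibrium you correctly compute the uninfected $2\times2$ block determinant as $(\tau_E+\delta_E)\,\delta_F\bigl(1-(1-s_h)\delta/\eta\bigr)$, but its positivity does \emph{not} follow from \eqref{hyp:coeffW}: it requires the extra inequality $(1-s_h)\delta<\eta$, which is independent of the bounds on $K$. For instance, with $s_h=0.1$, $\eta=0.5$, $\delta=2$ one can choose $K$ and $b$ so that \eqref{hyp:coeffW} holds, yet $(1-s_h)\delta=1.8>\eta$ and the invasion equilibrium is then a saddle. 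The authors' comment after the lemma that ``$s_h$ is expected to be close to~$1$'' suggests they are tacitly assuming this extra condition; either add it explicitly or flag that the stability claim in the lemma needs it. Second, your treatment of the coexistence equilibrium is only a plan. The determinant route is feasible---the relations $1-(E^*_{uC}+E^*_{iC})/K=\delta\delta_F/(\eta b)$ and $s_hF^*_{iC}/(F^*_{uC}+F^*_{iC})=1-\eta/\delta$ do collapse most entries of the Jacobian---but the computation must actually be carried out. The ``geometric alternative'' you offer is a heuristic, not a proof: bistability of two other equilibria does not by itself force the coexistence point to be a saddle.
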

Notice that the first assumption in \eqref{hyp:coeffW} boils down to consider that the birth rate is larger than the death rate and is generically satisfied for mosquitoes population. Since $s_h$ is expected to be close to $1$ (the case $s_h=1$ being the perfect cytoplasmic incompatibility case), the second inequality may be seen as a condition on $K$ to be large enough.

As above, we denote $u$ the release function of {\it Wolbachia}-infected mosquitos.
Assume that the system is initially at the {\it Wolbachia} free equilibrium,
we want to determine an optimal release function $u$ which brings the system as close as possible to the {\it Wolbachia} invasion equilibrium.
More precisely, let us consider $(E_u,F_u,E_i,F_i)$ solution to the following Cauchy problem:
\begin{equation}\label{pb:Wolbachia}
  \left\{
      \begin{aligned}
\frac{d}{dt} E_u &= \beta_E F_u \left(1-s_h \frac{F_i}{F_u+F_i}\right)\left(1-\frac{E_u+E_i}{K}\right) - \big(\tau_E+\delta_E\big) E_u,  \\
\frac{d}{dt} F_u &= \nu \beta_F E_u - \delta_{F} F_u, \\
\frac{d}{dt} E_i &= \eta \beta_E F_i\left(1-\frac{E_u+E_i}{K}\right) - \big(\tau_E+\delta_E\big) E_i,  \\
\frac{d}{dt} F_i &= \nu \beta_F E_i - \delta \delta_{F} F_i+u. \\
E_u(0) &= K\left(1-\frac{\delta_F}{b}\right),\quad F_u(0) = K\left(\frac{\nu\beta_F}{\delta_F}-\frac{\nu\beta_F}{b}\right), \quad E_i(0) = F_i(0) = 0.
\end{aligned}
\right.
\end{equation}
We introduce the following cost function
\begin{equation*}
J(u) = \frac 12 \left(E_u(T)^2 + F_u(T)^2 + \Big(K\left(1-\frac{\delta\delta_F}{b\eta}\right) - E_i(T)\Big)_+^2 + \Big(K\left(\frac{\nu\beta_F}{\delta\delta_F}-\frac{\nu\beta_F}{b\eta}\right) - F_i(T)\Big)_+^2\right),
\end{equation*}
with the standard notation for the positive part $X_+=\max \{X,0\}$ for $X\in \mathbb{R}$.
We investigate the following optimisation problem
\begin{equation}\label{pb_Wol}
\min_{u\in {\mathcal U}_{C,\overline{U}}} J(u), \qquad
{\mathcal U}_{C,\overline{U}} = \left\{0\leq u\leq \overline{U}, \quad \int_0^T u(t)dt \leq C\right\}.
\end{equation}
The following result give the existence of a solution. Its proof is postponed in the Appendix.
\begin{proposition}\label{prop:wolbachiaExistsOC}
Under the same assumptions as in Lemma \ref{lem:eqWol}, problem \eqref{pb_Wol} has a solution.
\end{proposition}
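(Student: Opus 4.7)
The plan is to adapt the direct method of the calculus of variations in the spirit of the proof of Proposition \ref{prop:existsterile}. I would first take a minimizing sequence $(u_n)_{n\in\N}\subset \mathcal{U}_{C,\overline{U}}$ and the associated state trajectories $(E_u^n,F_u^n,E_i^n,F_i^n)$ solving \eqref{pb:Wolbachia}. The set $\mathcal{U}_{C,\overline{U}}$ is convex, bounded in $L^\infty(0,T)$ and strongly closed in $L^1(0,T)$, hence weak-$\star$ closed in $L^\infty(0,T)$; by the Banach-Alaoglu theorem one may extract a subsequence (not relabeled) such that $u_n \rightharpoonup^\star u^\star$ in $L^\infty(0,T)$ for some $u^\star\in \mathcal{U}_{C,\overline{U}}$.

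The next step is to derive uniform a priori bounds on the states, independent of $n$. One checks that the set $\{E_u,E_i\geq 0,\ E_u+E_i\leq K\}$ is invariant by the dynamics: if $E_u+E_i=K$ then $\tfrac{d}{dt}(E_u+E_i)=-(\tau_E+\delta_E)K<0$, and the non-negativity of each coordinate follows from the usual tangent-cone argument (noting that $F_u\,F_i/(F_u+F_i)$ is the Lipschitz continuous extension by $0$ at $F_u=F_i=0$). The equation on $F_u$ gives the lower bound
\begin{equation*}
F_u^n(t)\geq F_u(0)\,e^{-\delta_F t}\geq F_u(0)\,e^{-\delta_F T}>0,
\end{equation*}
and upper bounds on $F_u^n,F_i^n$ follow by Gronwall, using $E_u^n,E_i^n\leq K$ and $u_n\leq \overline{U}$. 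In particular the denominator $F_u^n+F_i^n$ is uniformly bounded away from $0$, which will allow us to handle the a priori singular term $F_i^n/(F_u^n+F_i^n)$ as a Lipschitz function of $(F_u^n,F_i^n)$ on the relevant compact region.

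Once these bounds are secured, the right-hand sides of \eqref{pb:Wolbachia} are uniformly bounded in $L^\infty(0,T)$, so the sequence of state trajectories is equicontinuous and uniformly bounded on $[0,T]$. Arzel\`a-Ascoli yields uniform convergence (up to a subsequence) to some continuous limit $(E_u^\star,F_u^\star,E_i^\star,F_i^\star)$. Writing \eqref{pb:Wolbachia} in integral form, all nonlinear terms depend continuously on $(E_u^n,F_u^n,E_i^n,F_i^n)$ on the invariant region and pass to the limit by dominated convergence, while the linear term $\int_0^\cdot u_n(s)\,ds$ in the $F_i$ equation converges pointwise thanks to the weak-$\star$ convergence of $u_n$ tested against indicator functions of intervals. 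Hence the limit trajectory is the state associated with $u^\star$.

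To conclude, the functional $J$ is a continuous function of the state at the final time $T$ (the positive parts are continuous, as are the squares), so $J(u^\star)=\lim_{n\to\infty}J(u_n)=\inf_{\mathcal{U}_{C,\overline{U}}}J$, which shows that $u^\star$ is a minimizer. The main technical obstacle I expect is to justify rigorously the invariance of $\{E_u,E_i\geq 0,\ E_u+E_i\leq K\}$ and, more delicately, the uniform-in-$n$ lower bound on $F_u^n+F_i^n$ that is needed to tame the cytoplasmic incompatibility term; once this is in hand, the remainder of the argument is the now-classical compactness-plus-continuity scheme, exactly as in Proposition \ref{prop:existsterile}.
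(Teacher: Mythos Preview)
Your proposal is correct and follows essentially the same approach as the paper's own proof: both take a minimizing sequence, exploit weak-$\star$ compactness of $\mathcal{U}_{C,\overline{U}}$, establish the invariance of $\{E_u,E_i\geq 0,\ E_u+E_i\leq K\}$ together with the Gronwall lower bound $F_u^n(t)\geq F_u(0)e^{-\delta_F t}$ to control the denominator $F_u^n+F_i^n$, and then pass to the limit. The only cosmetic difference is that the paper phrases the compactness step as boundedness in $H^1(0,T)$ with weak $H^1$/strong $L^2$ convergence, whereas you use Arzel\`a--Ascoli and uniform convergence; both routes are equivalent here and the key technical ingredients you single out are exactly those the paper relies on.
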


System \eqref{eq:S4} may be even more simplified by assuming a fast dynamics of the aquatic phase and a large fertility, in the spirit of \cite{Stru2016}. It leads to a simple differential equation on the proportion of infected female mosquitoes, for which the optimisation problem has been studied in detail in \cite{optim}. In particular, it has been proved for this simplified system that the optimal strategy uses the maximal amount of mosquitoes, in other words that $\int_0^T u^*(t)\, dt=C$ whenever $\overline{U}T>C$. It is likely that the same property holds true when considering the more realistic system \eqref{pb:Wolbachia} even it seems more tedious to show it, as can be observed on simulations in the next section.

\section{Numerical simulations}
\label{sec:num}

We will now give some solutions of the optimal control problems \eqref{pb_TIS} and \eqref{pb_Wol}.
For this purpose, we will use the opensource optimization routine \textsc{GEKKO} (cf \cite{gekko}).
It enables the computation of a local minimizer of the optimization problem
using orthogonal collocation on finite elements to implicitly solve the differential algebraic equations.

\subsection{Sterile insect technique}

In this section, we will give some illustrations of the optimal strategy given by 
the optimal control problem \eqref{pb_TIS}.
We will use the parameters values of Table \ref{tab:steril} coming from \cite[Table 1-3]{bossin}.
We recall that $\nu$ is assumed to be equal to $0.5$.

\begin{table}[H]
\centering
\begin{tabular}{|c|c|c|c|}
\hline
\textit{Parameter}&\textit{Name}&\textit{Value interval}&\textit{Chosen value}\\\hline
 $\beta_E$&Effective fecundity&7.46--14.85&10  \\\hline
 $\gamma$&Mating competitiveness of sterilizing males&0--1&1  \\\hline
 $\tau_E$&Hatching parameter&0.005--0.25&0.05  \\\hline
 $\delta_E$&Mosquitoes in Aquatic phase death rate&0.023 - 0.046&0.03\\\hline
 $\beta_F$&Growth of female&0.005--0.025&0.010  \\\hline
 $\delta_F$&Female death rate&0.033 - 0.046&0.04\\\hline
 $\delta_s$&Infected male  death rate&& 0.12 \\\hline
\end{tabular}
\caption{Value intervals of the parameters for system \eqref{pb:sterile}}
\label{tab:steril}
\end{table}

As in \cite{bossin}, in order to get results relevant for an island
of 74 ha with an estimated male population of about 69 $ha^{-1}$, the density of male 
is equal to $M^*=F^*=69\times 74=5106.$
If we assume that $M_s=0$, we get
\begin{equation*}
\left\{\begin{array}{l}
0 = \beta_E F^*\left(1-\frac{E^*}{K}\right) - (\tau_E+\delta_E) E^*,  \\
0 = \nu\beta_F E^* - \delta_F F^*.
\end{array}\right.
\end{equation*}
Thus, the value of $K$ is given by the expression 
\begin{equation*}
K=\frac{E^*}{1-\frac{(\nu_E+\delta_E)\delta_F}{\nu\beta_F\beta_E}}\approx 5172.2.
\end{equation*}

\begin{figure}[h!]
\begin{center}
\includegraphics[width=4.9cm]{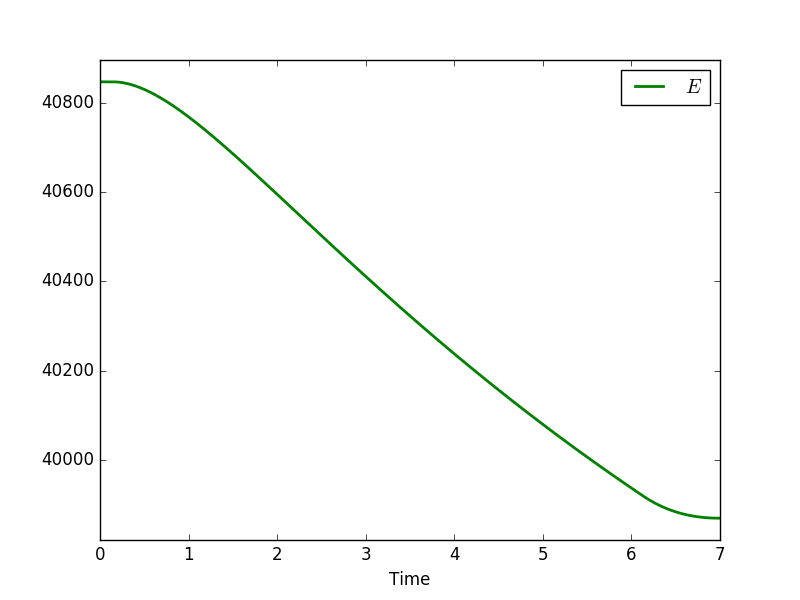}
\includegraphics[width=4.9cm]{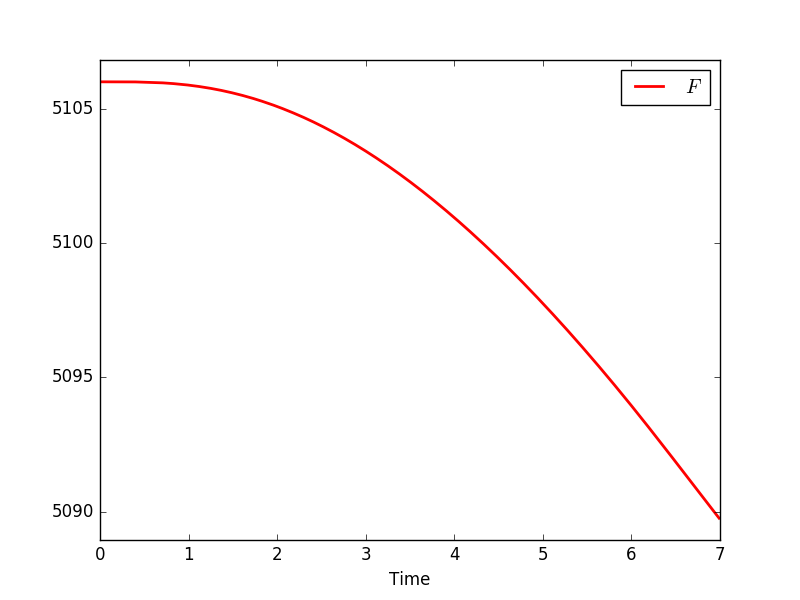}
\includegraphics[width=4.9cm]{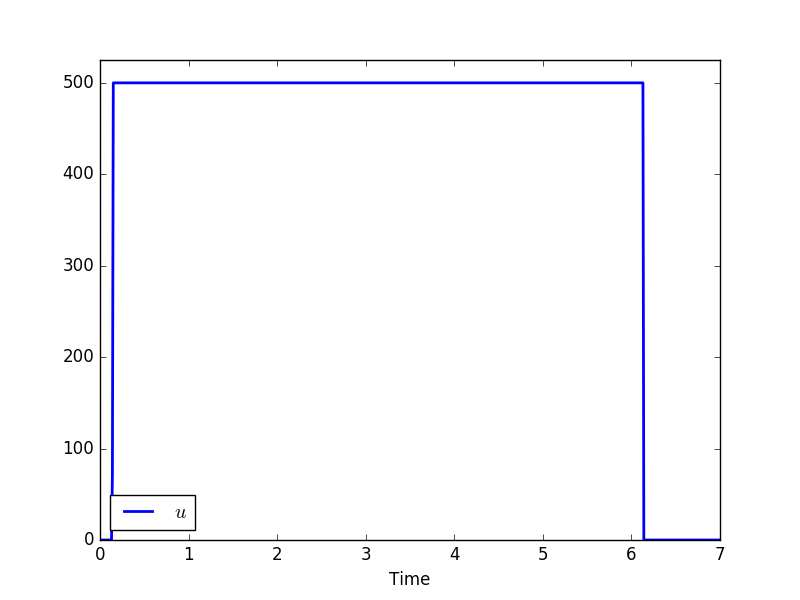}

\includegraphics[width=4.9cm]{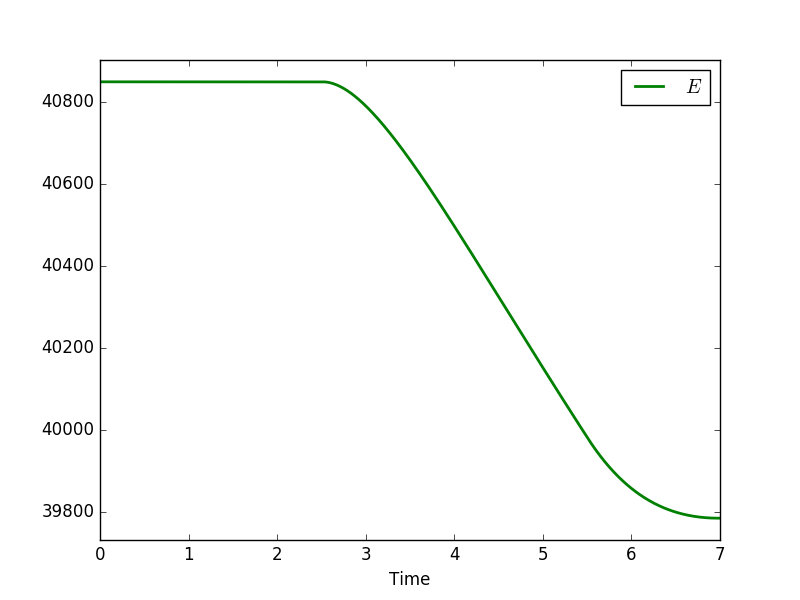}
\includegraphics[width=4.9cm]{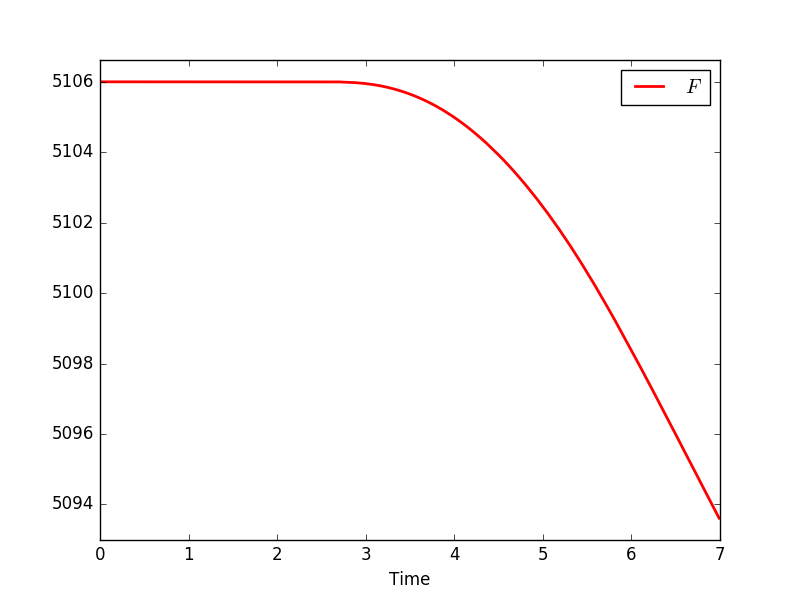}
\includegraphics[width=4.9cm]{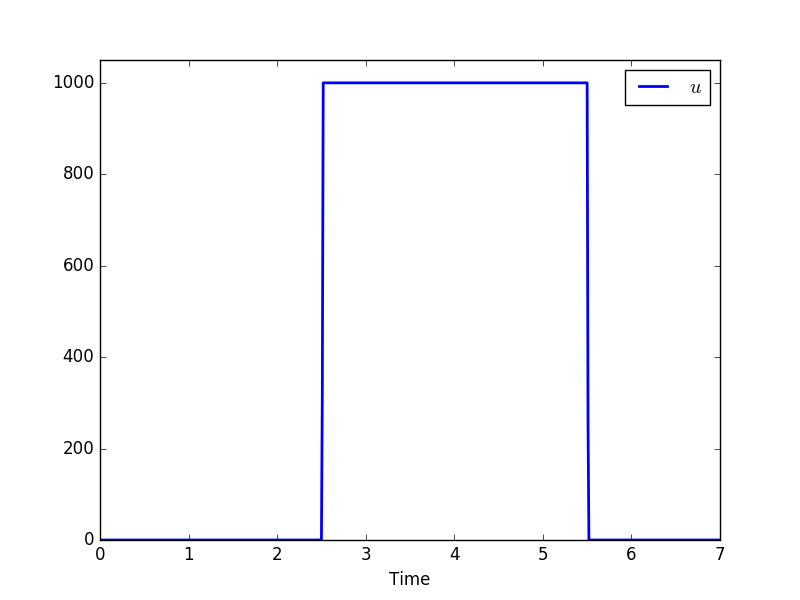}

\includegraphics[width=4.9cm]{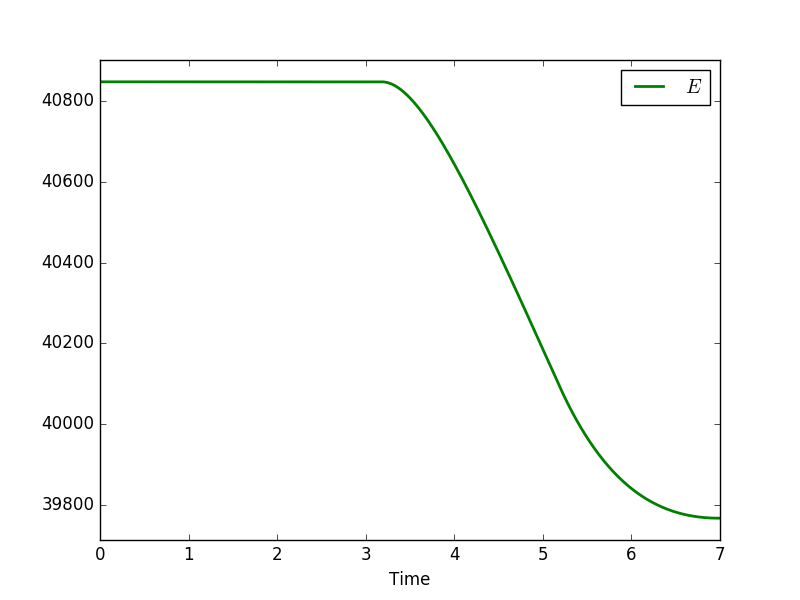}
\includegraphics[width=4.9cm]{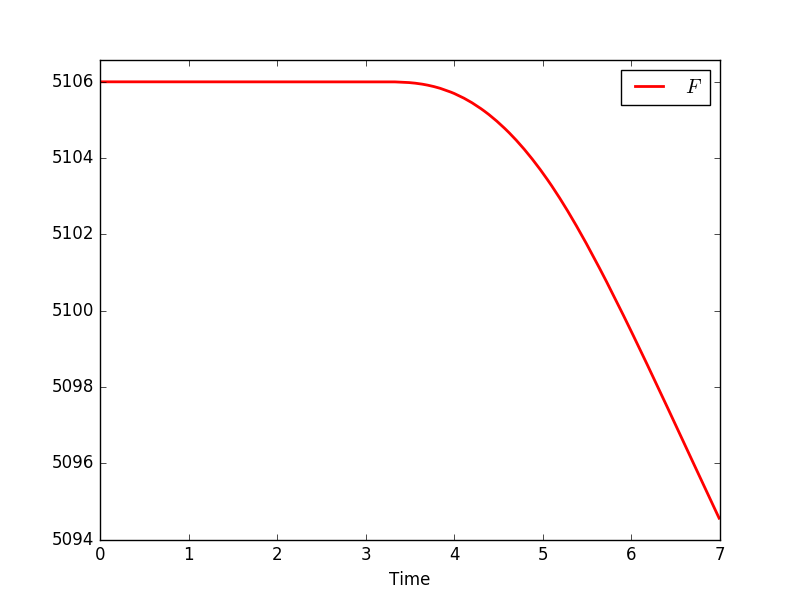}
\includegraphics[width=4.9cm]{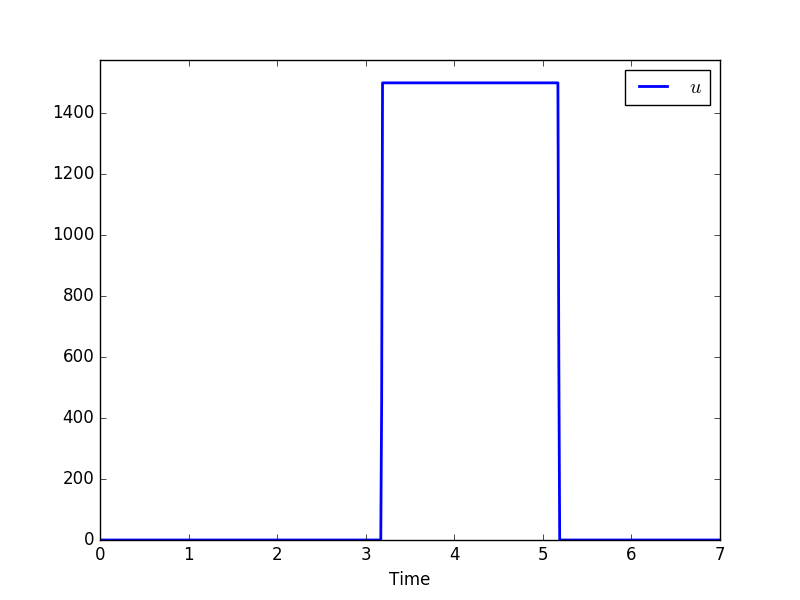}
\end{center}
\caption{Simulation of the sterile insect technique with the value of Table 
\ref{tab:steril}
for  $T=7$, $C=3000$, $\overline{U}=500$ (1st line), $1000$ (2nd line), $1500$ (3th line).}
\label{fig:steril}
\end{figure}

In Figure \ref{fig:steril}, we give some optimal strategies solution of the problem \eqref{pb_TIS}
on one week, i.e. $T=7$, a total quantity of sterile mosquitoes $C=3000$
and a maximum of instantaneous release equal to $\overline{U}=500$, $1000$ and $1500$.
As predicted by Proposition \ref{prop:existsterile}, the optimal control strategy uses the maximal amount of mosquitoes ($\int u=C$) and does not act at the end of the time interval.
Moreover, we can observe that 
\begin{itemize}
\item the optimal strategy is ``bang-bang'', \textit{i.e.} the optimal control
is either equal to zero or $\bar U$;
\item it seems preferable to mostly concentrate the releases at a particular time. 
\end{itemize}
It could be interesting in a future work to analyze this two phenomena.

\subsection{Population replacement}

The section is devoted to illustrate the optimal strategy produced by the optimal control problem \eqref{pb_Wol} with System \eqref{pb:Wolbachia} as constraint.
The parameters values for system \eqref{pb:Wolbachia} are given in Tables \ref{tab:steril} and \ref{tab:value wol}.
The expression of $b$ is given in Lemma \ref{lem:eqWol}.
The value of the cytoplasmic incompatibility parameter $s_h$ (corresponding to the fraction of eggs from uninfected females fertilize by infected males which will not hatch) comes from \cite{dutra2015lab}.
The fecundity reduction $\eta$ of infected females with respect to uninfected females
and the increase of mortality $\delta$ for infected mosquitoes
have been fixed following \cite{Hughes}.

\begin{table}[H]
\centering
\begin{tabular}{|c|c|c|c|}
\hline
\textit{Parameter}&\textit{Name}&\textit{Value interval}&\textit{Chosen value}\\\hline
 $s_h$&Probability ot cytoplasmic incompatibility&& 0.9951 \\\hline
    $\eta$& \begin{tabular}{l}Fecundity reduction of infected females\\ with respect to uninfected females
    \end{tabular}&0.85--1&0.95  \\\hline
     $\delta$&Increase of mortality for infected mosquitoes &1--1.7& 1.25 \\\hline
\end{tabular}
\caption{Value intervals of the parameters for System \eqref{pb:Wolbachia}}
\label{tab:value wol}
\end{table}
As for the sterile insect strategy, the initial density of mosquitoes will be equal to $F_u^*=5106$. 
We assume that the system \eqref{pb:Wolbachia} is initially at equilibrium $(E_u^*,F_u^*,0,0)$.
We can deduce the value of $K$ thanks to the expressions of the equilibrium $(E^*_{uE}, F^*_{uE}, E^*_{iE}, F^*_{iE})$ in Lemma \ref{lem:eqWol}.
The numerical results are displayed in Figure \ref{fig:wolbachia}, when we take a total amount of mosquitoes $C=10000$, and in Figure \ref{fig:wolbachia2} for a total amount of mosquitoes $C=1000$.
We first notice that, as in the case of the sterile insect technique, there exists a time after which the control function $u$ vanishes.

\begin{figure}[h]
\begin{center}
~\hfill\includegraphics[width=4.9cm]{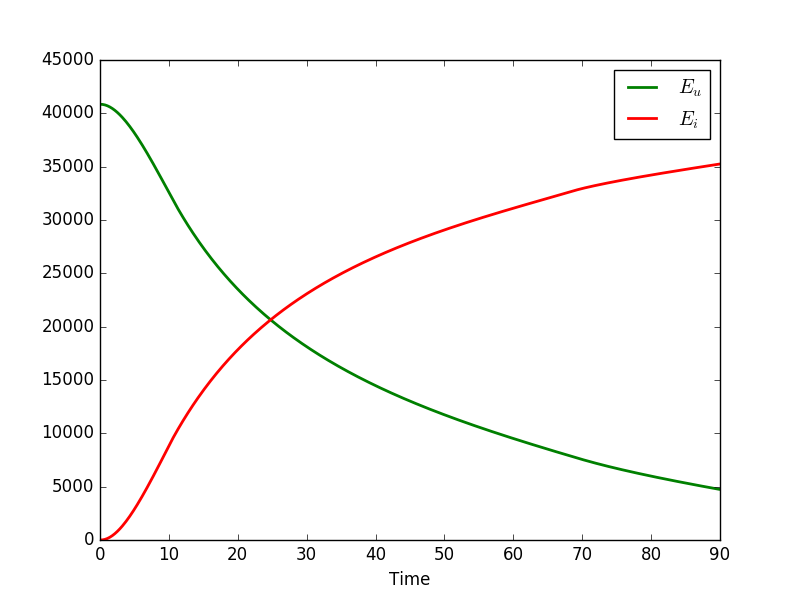}\hfill
\includegraphics[width=4.9cm]{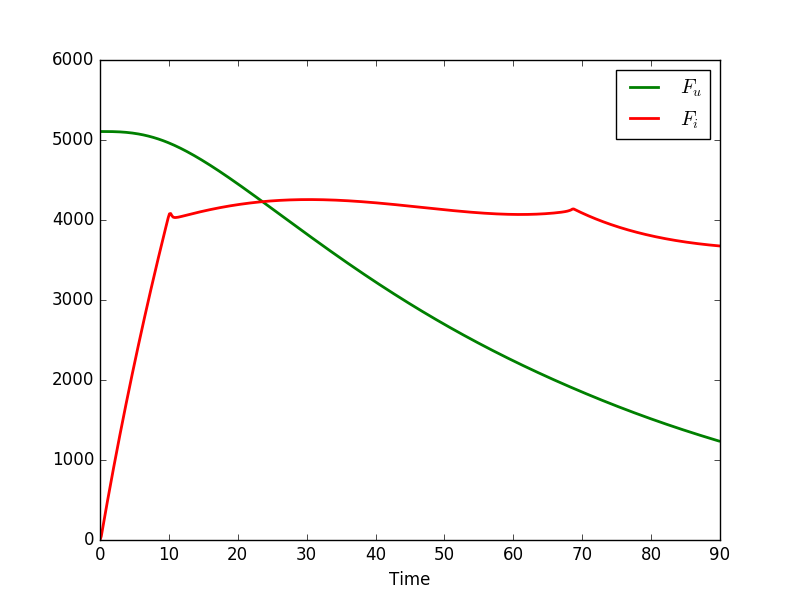}\hfill
\includegraphics[width=4.9cm]{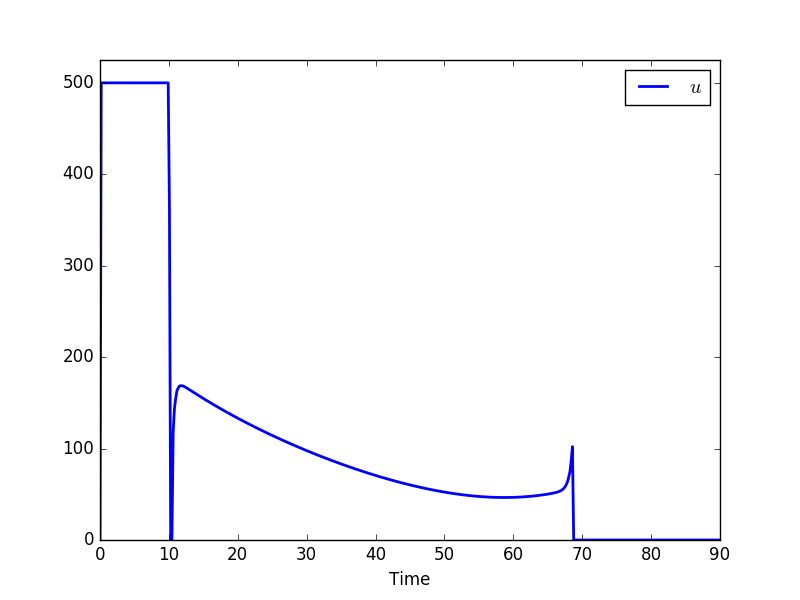}\hfill~

~\hfill\includegraphics[width=4.9cm]{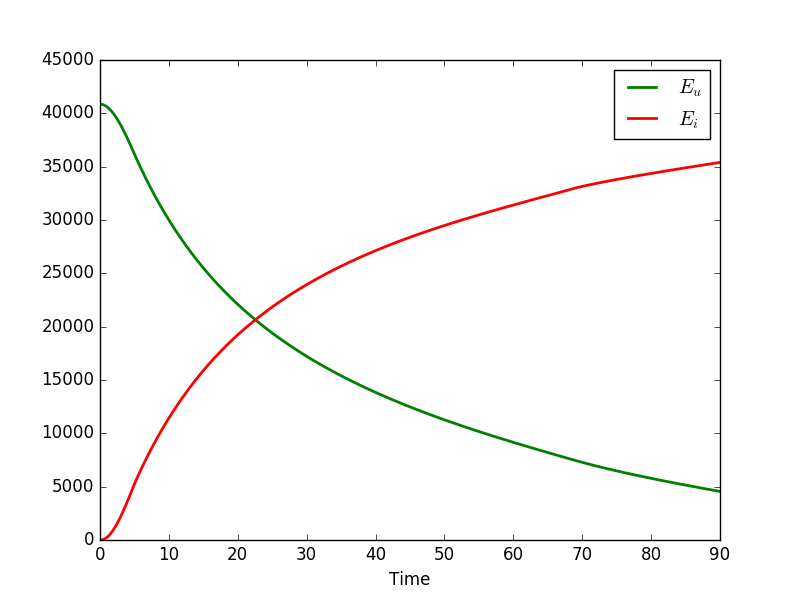}\hfill
\includegraphics[width=4.9cm]{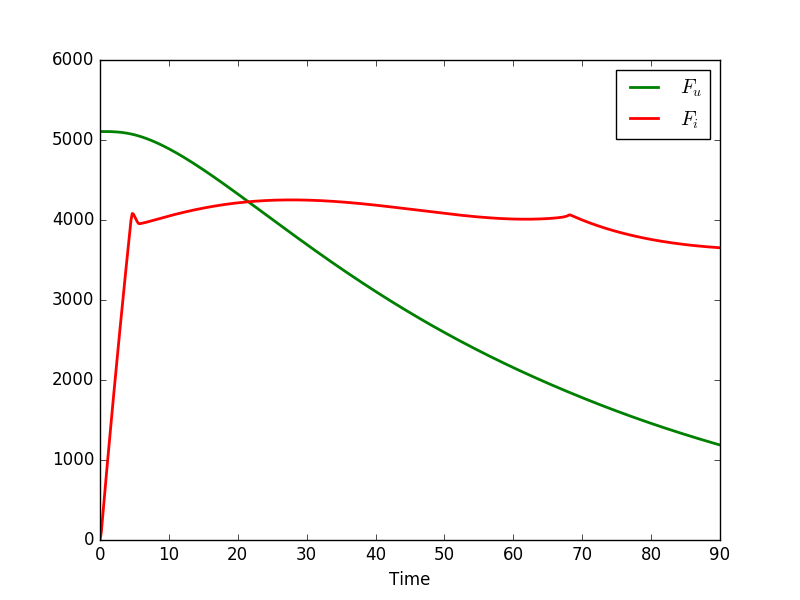}\hfill
\includegraphics[width=4.9cm]{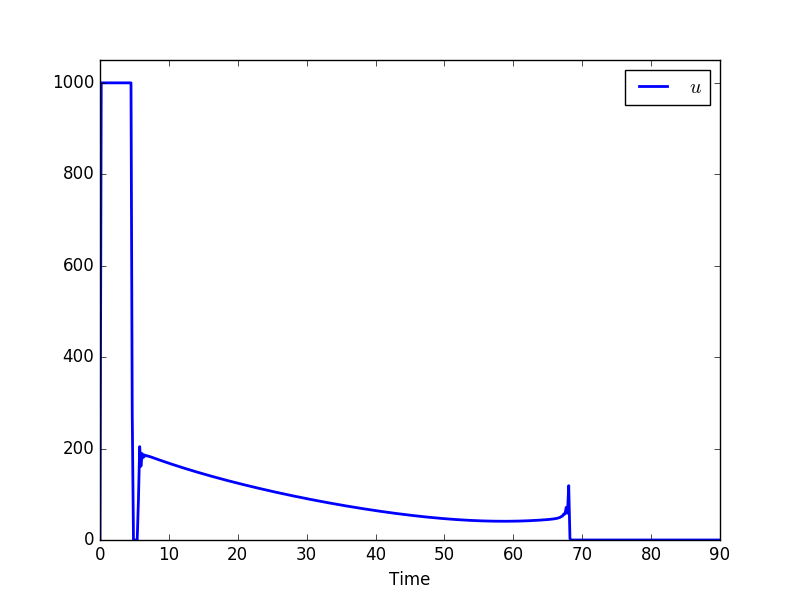}\hfill~

~\hfill\includegraphics[width=4.9cm]{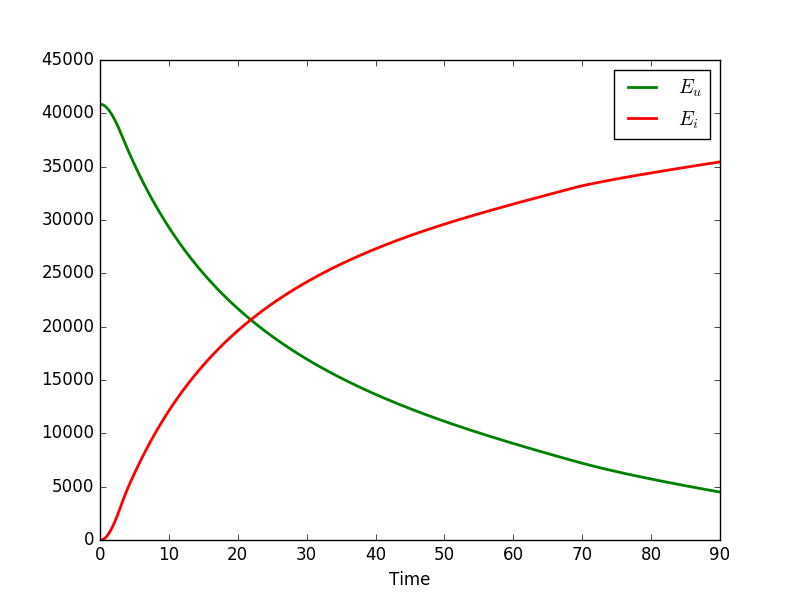}\hfill
\includegraphics[width=4.9cm]{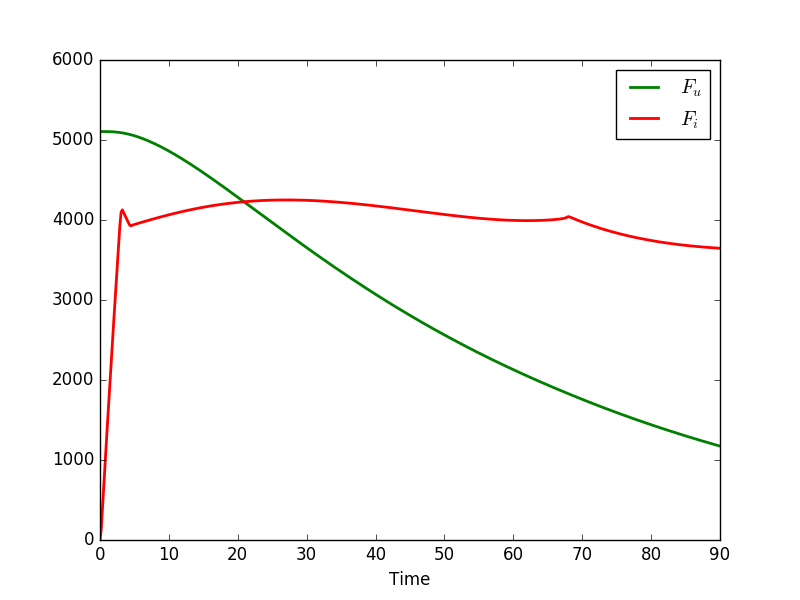}\hfill
\includegraphics[width=4.9cm]{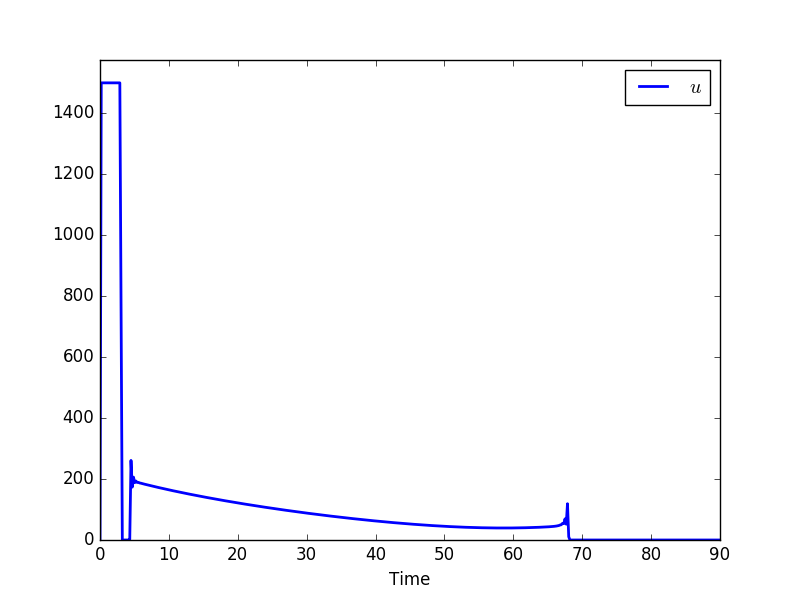}\hfill~
\end{center}
\caption{Simulation of the wolbachia technique with the values of Tables 
\ref{tab:steril} and \ref{tab:value wol}
for  $T=90$, $C=10000$, $\overline{U}=500$ (1st line), $1000$ (2nd line), 
$1500$ (3th line).}
\label{fig:wolbachia}
\end{figure}

\begin{figure}[h]
\begin{center}
~\hfill\includegraphics[width=4.9cm]{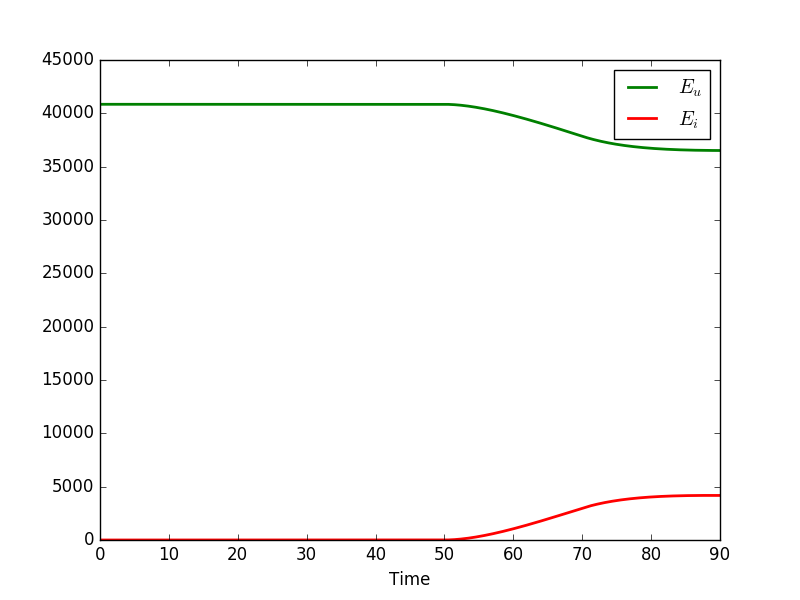}\hfill
\includegraphics[width=4.9cm]{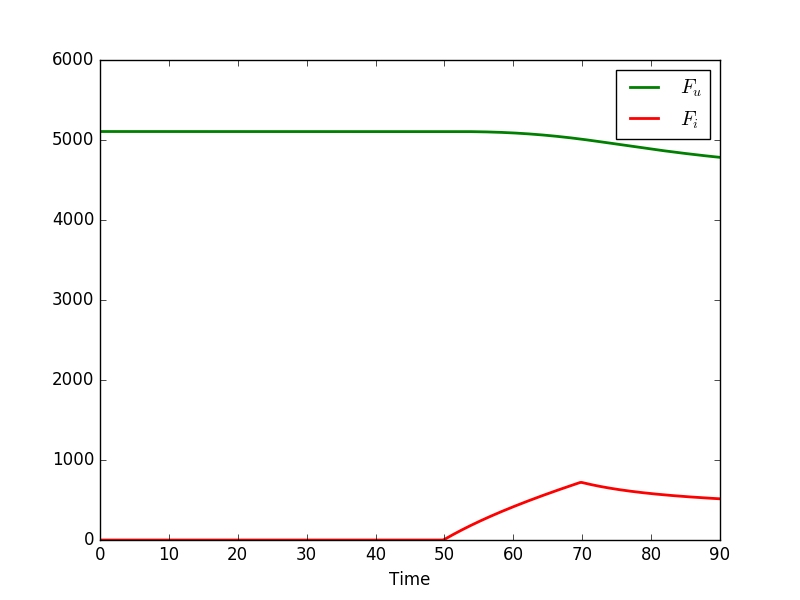}\hfill
\includegraphics[width=4.9cm]{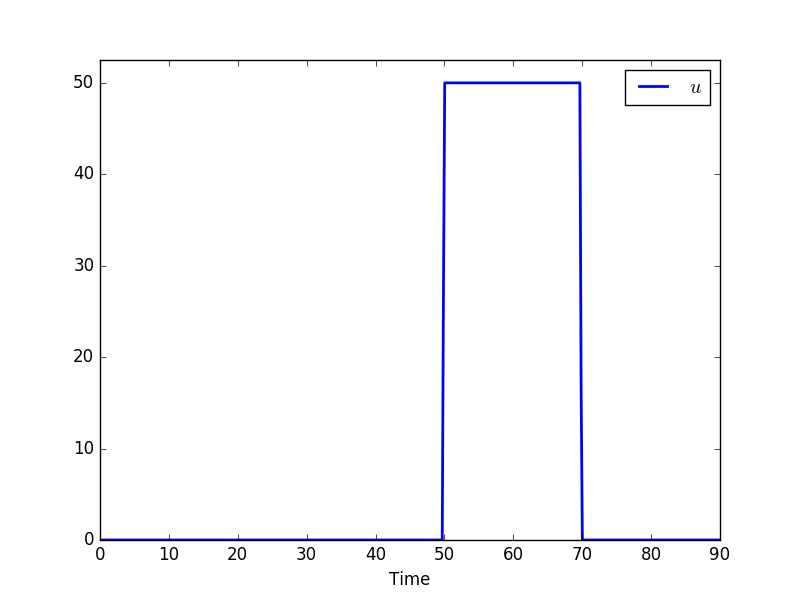}\hfill~

~\hfill\includegraphics[width=4.9cm]{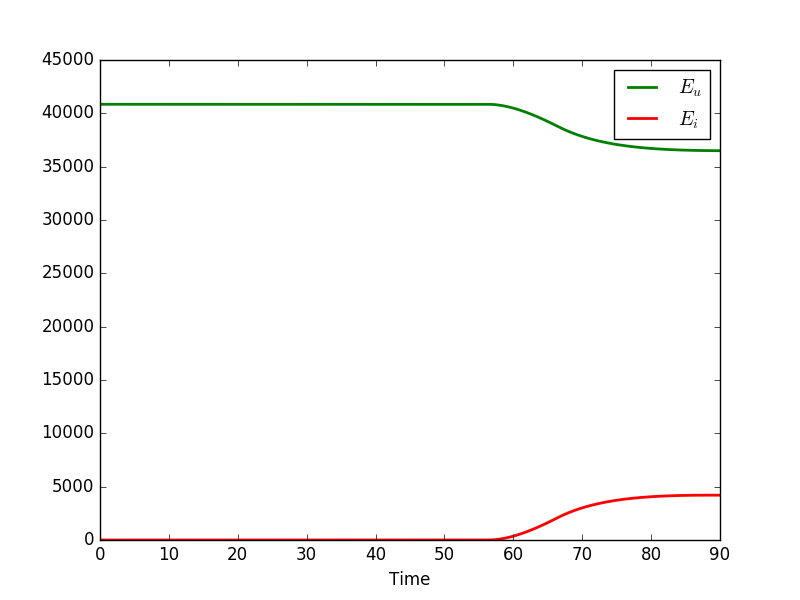}\hfill
\includegraphics[width=4.9cm]{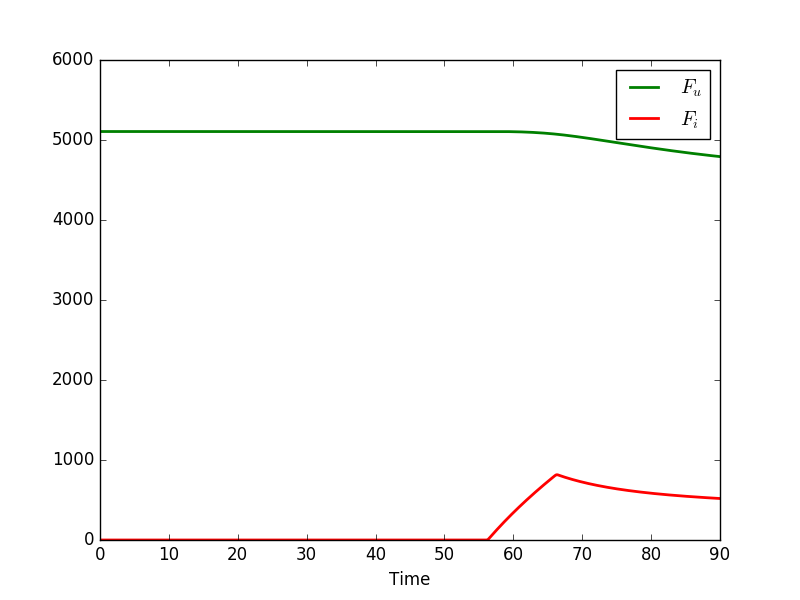}\hfill
\includegraphics[width=4.9cm]{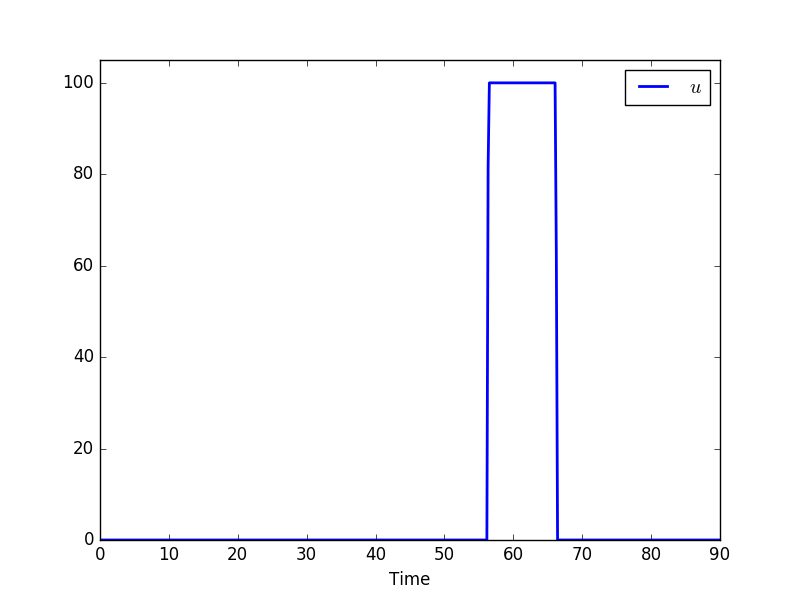}\hfill~

~\hfill\includegraphics[width=4.9cm]{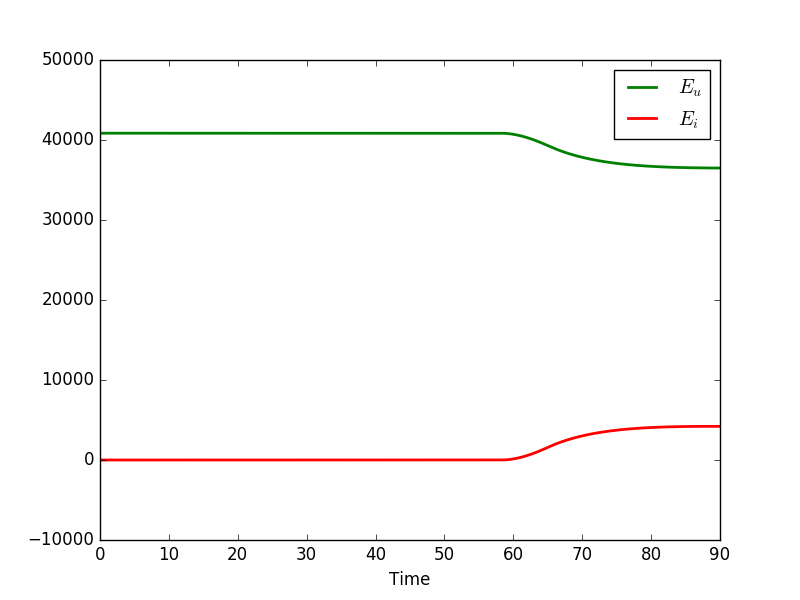}\hfill
\includegraphics[width=4.9cm]{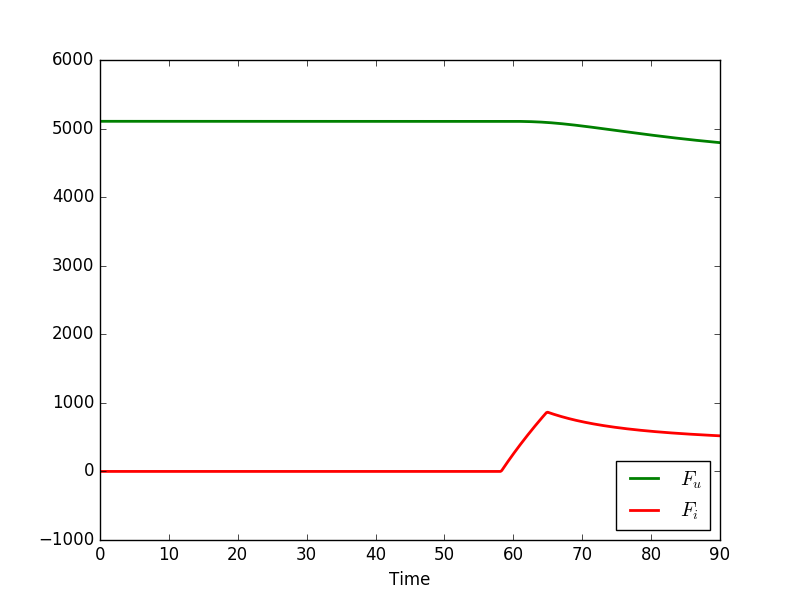}\hfill
\includegraphics[width=4.9cm]{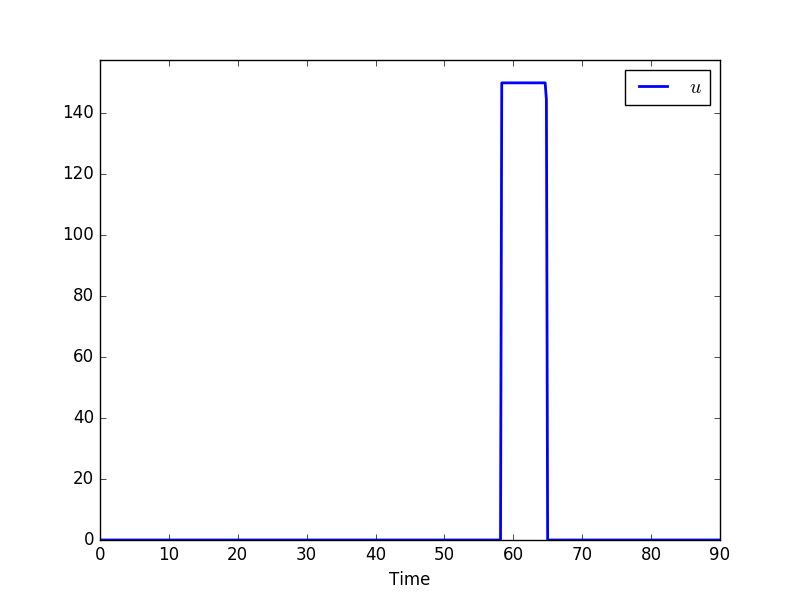}\hfill~
\end{center}
\caption{Simulation of the wolbachia technique with the values of Tables 
\ref{tab:steril} and \ref{tab:value wol}
for  $T=90$, $C=1000$, $\overline{U}=50$ (1st line), $100$ (2nd line), 
$150$ (3th line).}
\label{fig:wolbachia2}
\end{figure}

We can draw also some conclusions by comparing the Figures \ref{fig:wolbachia} and \ref{fig:wolbachia2}. Indeed, we observe that when the amount of mosquitoes is large enough, it is better to act at the beginning of the process. Whereas if the amount of mosquitoes is small, it seems better to make the releases later.
This observation should be related to the threshold phenomenon which has been observed in \cite{optim}, where the authors have approximated the model by a single equation on the proportion of infected adult mosquitoes.
For this very simple system, the authors have proved that the control is bang-bang and that there exists a threshold on the total amount of mosquitoes above which it is better to act at the end of the time interval and below which the action occurs at the beginning of the time interval.
For the model \eqref{pb:Wolbachia} with six equations, 
we remark, in Figures \ref{fig:wolbachia} and \ref{fig:wolbachia2}, that the optimal strategy is more complex.
However, the threshold phenomenon seems still to be true.

\section{Discussion and conclusion}

In this paper, we investigate the problem of optimizing a release protocol for a population replacement strategy and for the sterile insect technique applied to the control of \textit{Aedes} mosquitoes population. In our approach, we look for a control function $u$ minimizing the distance to the desired equilibrium (replacement or extinction of the wild population) at the final time of treatment. We show, in particular, the existence of such an optimal control and, after establishing some properties, we illustrate it thanks to numerical simulations. 

We discuss now some limitations of our models which will be addressed in a future work \cite{postdoc}. 
First, in both situations, we use a functional cost consisting in measuring a distance in the sense of mean least square to the objective equilibrium. 
Thanks to this choice, we are able to provide the temporal distribution of the release function in order to be as close as possible to the steady state aimed (extinction for the sterile insect technique, total invasion by \textit{Wolbachia} for the population replacement strategy).
This approach is totally justified when we have a given number of mosquitoes to release for a given duration and want to optimize the release protocol.
However, we might be interested by different approach; for example we might be interested by minimizing the number of mosquitoes to use, since the production of such mosquitoes may be costly, financially speaking.
Or we might be interested by reaching the steady state aimed. Indeed, with the functional costs used in this paper, we are not able to guarantee the success of the strategy.
To answer to this interesting question, we need first to determine the basin of attraction for the aimed steady state and then use a different functional cost for which the study performed in this paper should be adapted.

Second, for the mathematical modelling, we have done several assumptions in the aim to simplify the system in order to derive models that could be tractable for a mathematical study. Some of these assumptions should be weakened. In particular, as already mentioned, the pertinence of system \eqref{eq:S1} for low population is not clear. Since the population of mosquitoes is usually high, the use of such models is often justified. However, when we aim at eradicating this population by use of sterile insect technique, the behaviour of the system close to the extinction steady state is important. 
Therefore, a model which describes more carefully the dynamics close to the extinction steady state is necessary.
With this aim, a strategy consists in using a birth rate $\beta_E$ depending on the male density. 
For instance, in \cite{bossin}, a function $\beta_E$ depending exponentially of the male density has been considered, taking into account an Allee effect which guarantees the stability of the extinction steady state.

Moreover, it has been observed that the death rate for males may be higher than the one for females. Thus, assuming that the number of male mosquitoes is the same as the number of female one is a very strong assumption that should be weakened.

Nevertheless, the present work, and the rigorous mathematical results that we were able to prove in this simple simplified setting, should be a useful step towards the future understanding of more general and more realistic models.

\section*{Acknowledgments}
The authors were partially supported by the Project ``Analysis and simulation of optimal shapes - application to lifesciences'' of the Paris City Hall.

\section*{Conflict of interest}
The authors declare that they have no conflict of interest.

\appendix

\section{Proof of Proposition \ref{prop:sanseffetAllee}}
Thanks to Assumption \eqref{eq:cond prop}, there exists $\varepsilon>0$ small enough such that 
\begin{equation}\label{eq:alpha}
(\tau_{E}+\delta_E)\delta_F < \frac{1}{1+\varepsilon}\nu \beta_E \beta_F.
\end{equation}
Using that  
$M_s(t)=e^{-\delta_st}M_{s0}$, $M(t)\geqslant e^{-\delta_Mt}M_{0}$ for $t\geqslant0$
and \eqref{eq:cond prop}, 
we deduce that there exists $t^*>0$ such that 
$$
\gamma M_s(t)<\varepsilon M(t) \quad \mbox{ for all }t\geqslant t^*.
$$
Let us assume by contradiction that the extinction steady state is stable. Then, we place ourselves in a neighborhood of this equilibrium in which system \eqref{eq:sterile1} is monotonous.
We deduce from standard comparison principle for monotonous system that, for all $t\geqslant t^*$ 
\begin{equation}\label{eq:Ei Fi}
E_1(t)\leqslant E(t),\quad
F_1(t)\leqslant F(t),\quad M_1(t)\leqslant M(t),
\end{equation}
where, $(E_1,F_1,M_1)$ solves the following system, for $t\geqslant t^*$,
\begin{equation}\label{system1}
  \left\{
  \begin{aligned}
\frac{d}{dt} E_1 &= \beta_E F_1 \left(1-\frac{E_1}{K}\right)\frac{1}{1+\varepsilon} - (\tau_E+\delta_E) E_1,  \\
\frac{d}{dt} F_1 &= \nu \beta_F E_1 - \delta_F F_1, \\
\frac{d}{dt} M_1 &= (1-\nu) \beta_F E_1 - \delta_M M_1.
  \end{aligned}
  \right.
\end{equation}
complemented with initial data $(E_1,F_1,M_1)(t^*)=(E,F,M)(t^*)$.
We may study the stability of the extinction steady state for this later system. The Jacobian of this system in $0$ is given by
$$
\text{Jac}(0) = \begin{pmatrix}
-(\tau_E+\delta_E) & \frac{\beta_E}{1+\varepsilon} & 0  \\
\nu \beta_F & -\delta_F & 0  \\
(1-\nu) \beta_F & 0 & -\delta_M \\
\end{pmatrix}
$$
The characteristic polynomial for this matrix is given by
$$
P_0(X) = -(X+\delta_M)\left(
X^2+X(\delta_F+\delta_E+\tau_E) +\delta_F(\tau_E+\delta_E) -\frac{\nu\beta_E\beta_F}{1+\varepsilon}
\right).
$$
We have that $P_0(x)\to -\infty$ as $x\to +\infty$, and under assumption \eqref{eq:alpha}, $P_0(0)>0$, then $P_0$ admits a positive root. 
Hence, Jac$(0)$ admits a positive eigenvalue, and the extinction state for system \eqref{system1} is unstable.
From the comparison in \eqref{eq:Ei Fi}, we conclude the proof.

\section{Proof of Proposition \ref{prop:existsterile}}

For the analysis of the optimal control problem \eqref{pb_TIS}, it will be useful to notice that the solutions of System \eqref{pb:sterile} remain bounded.

\begin{lemma} \label{lem:1403}
Let $u\in {\mathcal U}_{C,\overline{U}} $ and $(E,F,M_s)$ be the solution of System \eqref{pb:sterile} associated to the control function choice $u$. For every $t\in [0,T]$, one has
$$
E_2^*e^{-(\tau_E+\delta_E)t}\leq E(t)< K \qquad\text{and}\qquad  F_2^*e^{-\delta_F t}\leq F(t)\leq 
K\left(\frac{\nu \beta_F}{\delta_F}-\frac{(\tau_E+\delta_E)}{\beta_E}e^{-\delta_F t}\right).
$$
\end{lemma}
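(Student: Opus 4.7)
The plan is to prove the four inequalities separately using a combination of direct Gronwall-type arguments and a comparison/maximum principle. Nonnegativity of $E$, $F$, $M_s$ is needed throughout and follows at once: $M_s$ satisfies $\dot{M}_s = u - \delta_s M_s$ with $u\geq 0$ and $M_s(0)=0$, hence $M_s(t)\geq 0$; then $F$ solves a linear equation $\dot F = \nu\beta_F E - \delta_F F$ so $F\geq 0$ as soon as $E\geq 0$; and for $E$, on $\{E=0\}$ the right-hand side of its equation is nonnegative, which gives $E\geq 0$ (with $E(0)=E_2^*>0$, one even has $E>0$).

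For the lower bounds, I would discard the nonnegative nonlinear reproduction term in the $E$-equation to obtain $\dot E \geq -(\tau_E+\delta_E)E$, and use $E\geq 0$ in the $F$-equation to obtain $\dot F \geq -\delta_F F$. Integrating these differential inequalities from $0$ with $E(0)=E_2^*$ and $F(0)=F_2^*$ delivers
\[
E(t) \geq E_2^* e^{-(\tau_E+\delta_E)t}, \qquad F(t) \geq F_2^* e^{-\delta_F t}.
\]

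For the strict upper bound $E(t)<K$, I would argue by contradiction using the sign of $\dot E$ on $\{E=K\}$. Since $E(0)=E_2^*<K$ by definition of $E_2^*$ and the assumption \eqref{eq:cond prop}, if the set $\{t\in[0,T]\,:\,E(t)=K\}$ were nonempty, letting $t_0$ be its infimum would give $E(t_0)=K$ and $\dot E(t_0)\geq 0$. But plugging $E(t_0)=K$ into the $E$-equation kills the nonlinear term, leaving $\dot E(t_0) = -(\tau_E+\delta_E)K<0$, a contradiction. Hence $E(t)<K$ on $[0,T]$.

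For the upper bound on $F$, I would use the bound $E<K$ just obtained to linearise the $F$-equation: $\dot F \leq \nu\beta_F K - \delta_F F$. Solving this first-order linear differential inequality with Duhamel's formula and the initial datum $F(0)=F_2^*=\frac{\nu\beta_F}{\delta_F}E_2^*$ gives
\[
F(t) \leq F_2^* e^{-\delta_F t} + \frac{\nu\beta_F K}{\delta_F}\bigl(1-e^{-\delta_F t}\bigr),
\]
and a direct substitution of $E_2^*=K\bigl(1-\tfrac{(\tau_E+\delta_E)\delta_F}{\nu\beta_E\beta_F}\bigr)$ into $F_2^*$ collapses this expression to exactly $K\bigl(\tfrac{\nu\beta_F}{\delta_F}-\tfrac{\tau_E+\delta_E}{\beta_E}e^{-\delta_F t}\bigr)$, which is the claimed bound. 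None of the four steps is a real obstacle; the only mildly delicate point is the strictness of $E<K$, handled by the contradiction argument above using the fact that the logistic-type factor $(1-E/K)$ vanishes precisely at $E=K$ while the mortality term is strictly negative there.
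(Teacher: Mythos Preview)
Your proof is correct and follows essentially the same approach as the paper: nonnegativity by a barrier argument, $E<K$ by contradiction at the first hitting time, the lower bounds by discarding the nonnegative reproduction term and applying Gronwall, and the upper bound on $F$ via Duhamel's formula combined with $E<K$ and the explicit value of $F_2^*$. The only cosmetic difference is that the paper phrases the contradiction at $t_0$ as ``$\dot E(t_0)<0$ forces $E(t)>K$ for $t<t_0$ close to $t_0$'' rather than ``$\dot E(t_0)\geq 0$ must hold at the first hitting time,'' but these are equivalent.
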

\begin{proof}
Notice as a preliminary remark that a standard barrier argument ensures the positiveness of solutions to System \eqref{pb:sterile}. Let us show the right inequality on $E(\cdot)$. One has $E(0)<K$. Assume by contradiction the existence of $t_0\in (0,T]$ such that $E(t_0)=K$. Without loss of generality, we assume that $t_0$ is the first solution of the equation $E(t)=K$ on $(0,T]$. Since $\frac{dE}{dt}(t_0)=-(\tau_E+\delta_E)K<0$, we infer that $E(t)>K$ for $t<t_0$, close enough to $t_0$ whence a contradiction. 

The left-hand side inequality on the function $E(\cdot)$ follows directly from the observation that the right-hand side of the first equation of System \eqref{pb:sterile} is bounded by below by $-(\tau_E+\delta_E)E$ and a Gronwall argument.

Regarding now the inequalities on $F$, we claim that the left inequality follows from the positiveness of $E$. Moreover, by using that $E(\cdot)<K$ and the expression of $b$, we get
\begin{eqnarray*}
F(t) &=& e^{-\delta_F t}F(0)+\nu \beta_F\int_0^t e^{-\delta_F (t-s)}E(s)\, ds 
\leq K\left(\frac{\nu \beta_F}{\delta_F}-\frac{(\tau_E+\delta_E)}{\beta_E}e^{-\delta_F t}\right).
\end{eqnarray*}
\end{proof}

\paragraph{Existence of an optimal control. }Let us consider a minimizing sequence $(u_n)_{n\in \mathbb{N}}$ and denote by $(E_n,F_n,{M_s}_n)_{n\in \mathbb{N}}$ the corresponding solution to System \eqref{pb:sterile}. Noting that the class ${\mathcal U}_{C,\overline{U}} $ of admissible controls is compact for the $L^\infty$ weak-star topology, we infer the existence of $u^*\in {\mathcal U}_{C,\overline{U}}$ such that $(u_n)_{n\in \mathbb{N}}$ converges up to a subsequence to $u^*$  for the $L^\infty$ weak-star topology. Since
$$
{M_s}_n:\mathbb{R}_+\ni t\mapsto \int_0^t e^{-\delta_S(t-s))}u_n(s)\, ds,
$$
the sequence $({M_s}_n)_{n\in \mathbb{N}}$ converges in $H^1(0,T)$ up to a subsequence to $M_s^*$ given by 
$$
M_s^*:\mathbb{R}_+\ni t\mapsto \int_0^t e^{-\delta_S(t-s))}u^*(s)\, ds.
$$
According to Lemma \ref{lem:1403}, the triple $(E_n,F_n,{M_s}_n)$ is uniformly bounded on $[0,T]$. By using this boundedness property, one easily gets that $(\frac{dE_n}{dt})_{n\in \mathbb{N}}$ and $(\frac{dF_n}{dt})_{n\in \mathbb{N}}$ are bounded in $C^0([0,T])$ and therefore, $(E_n,F_n)_{n\in \mathbb{N}}$ is bounded in $W^{1,\infty}(0,T)$. According to the Ascoli theorem, the sequence $(E_n,F_n)_{n\in \mathbb{N}}$ converges to some $(E^*,F^*)\in W^{1,\infty}(0,T)$ in $C^0([0,T])$. As a consequence, according to \eqref{pb:sterile}, $(\frac{dE_n}{dt},\frac{dF_n}{dt},\frac{d{M_s}_n}{dt})_{n\in \mathbb{N}}$ is bounded in $L^2(0,T)$ and therefore, $(E_n,F_n,{M_s}_n)_{n\in \mathbb{N}}$ also converges (up to a subsequence) to  $(E^*,F^*,M_s^*)$ in $H^1(0,T)$. We then infer from all the considerations above that $(E^*,F^*,M_s^*)$ satisfies System \eqref{pb:sterile} and that $(J(u_n))_{n\in \mathbb{N}}$ converges, up to a subsequence, to $J(u^*)$. The existence follows.

\paragraph{First order optimality conditions.} 
Let $u^*$ be an optimal control for Problem \eqref{pb_TIS} and $(E^*,F^*,M_s^*)$ be the corresponding trajectories, solutions of \eqref{pb:sterile} for $u=u^*$. To write the first order optimality conditions, we will use the Pontryagin Maximum Principle (PMP) . To take into account the integral constraint on $u$, it is convenient to introduce a new state variable $y$ solving the o.d.e.
$$
y'(t)=u(t)\quad \text{on }[0,T]\qquad \text{and}\qquad y(0)=0
$$
in such a way that the constraint $\int_0^T u(t)\, dt\leq C$ rewrites as the terminal condition $y(T)\leq C$.

Let us introduce the function $f_E$ defined by
$$
f_E(E,F,M_s)= \beta_E F\left(1-\frac{E}{K}\right)\frac{F}{F+\gamma M_s}
$$
as well as the Hamiltonian of Problem \eqref{pb_TIS}, given by
\begin{eqnarray*}
\mathcal{H}((E,F,M_s,y),(p_1,p_2,p_3,\lambda),u)&=& p_1 \left(f_E(E,F,M_s) - (\tau_E+\delta_E) E\right)+p_2\left(\nu\beta_F E - \delta_F F\right)\\
&& +p_3\left(u - \delta_S M_s\right)+\lambda u.
\end{eqnarray*}
According to the Maximum Principle (see, e.g. \cite{Lee-Markus}), there exist an absolutely continuous mapping $p:[0,T]\to \mathbb{R}^3$ called adjoint vector such that the so-called extremal $((E^*,F^*,M_s^*,y^*),(p_1^*,p_2^*,p_3^*,\lambda^*),u^*)$ satisfies a.e. in [0,T]:
\begin{itemize}
\item {\it Adjoint equations:} 
\begin{equation}\label{metz-strasb0711}
-\frac{d}{dt}\begin{pmatrix}p_1^*\\ p_2^*\\ p_3^*\end{pmatrix}=\begin{pmatrix}
\frac{\partial f_E}{\partial E}(E^*,F^*,M_s^*)-(\tau_E+\delta_E) & \nu\beta_F & 0\\
\frac{\partial f_E}{\partial F}(E^*,F^*,M_s^*) & -\delta_F & 0\\
\frac{\partial f_E}{\partial M_s}(E^*,F^*,M_s^*) & 0 & -\delta_s
\end{pmatrix}\begin{pmatrix}p_1^*\\ p_2^*\\ p_3^*\end{pmatrix}
\end{equation}
and in addition, ${\lambda^*}'=0$ which implies that $\lambda^*$ is a constant (still denoted $\lambda^*$ with a slight abuse of notation). 
\item {\it Minimality condition:} 
$$
\text{for a.e. }t\in [0,T], \ u^*(t)\text{ solves the problem }\min_{0\leq v\leq \bar U}(p_3^*+\lambda^*) v
$$
and therefore, one has 
\begin{equation}\label{metz-strasb0712}
p_3^*+\lambda^*\geq 0 \text{ on }\{u^*=0\}\qquad \text{and}\qquad p_3^*+\lambda^*\leq 0 \text{ on }\{u^*=\bar U\}.
\end{equation}
\item {\it Transversality conditions:} we impose the terminal conditions 
\begin{equation}\label{metz-strasb0713}
p_1^*(T)=E^*(T), \quad p_2^*(T)=F^*(T), \quad p_3^*(T)=0, \quad \lambda^*(T)=\xi
\end{equation}
on the adjoint state, where $\xi\in \mathbb{R}_+$ satisfies moreover the complementary condition $\xi (y(T)-C)=0$.
\end{itemize}
We infer from \eqref{metz-strasb0713} that $\lambda^*\geq 0$.

\paragraph{The total number of mosquitoes is used.}

Let us start with a preliminary lemma, whose proof is postponed at the end of this section.

\begin{lemma}\label{strasb:1720}
Let us assume that $E(0)<K$. Then, the solution $(E,F,M_s)$ of System \eqref{pb:sterile} satisfies
$$
\frac{\partial f_E}{\partial E}(E,F,M_s) <  0, \quad \frac{\partial f_E}{\partial F}(E,F,M_s) > 0\quad \text{and}\quad \frac{\partial f_E}{\partial M_s} (E,F,M_s)<  0.
$$
\end{lemma}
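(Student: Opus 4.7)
The plan is to prove Lemma \ref{strasb:1720} by direct computation of the three partial derivatives of $f_E$, and then invoke Lemma \ref{lem:1403} to guarantee strict positivity of $F$ and the strict inequality $E<K$ all along $[0,T]$.

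First I would compute the derivatives. With $f_E(E,F,M_s)= \beta_E F(1-E/K)\,F/(F+\gamma M_s)$ one gets
\begin{align*}
\frac{\partial f_E}{\partial E}(E,F,M_s) &= -\frac{\beta_E}{K}\cdot\frac{F^2}{F+\gamma M_s},\\
\frac{\partial f_E}{\partial F}(E,F,M_s) &= \beta_E\left(1-\frac{E}{K}\right)\cdot\frac{F(F+2\gamma M_s)}{(F+\gamma M_s)^2},\\
\frac{\partial f_E}{\partial M_s}(E,F,M_s) &= -\beta_E\left(1-\frac{E}{K}\right)\cdot\frac{\gamma F^2}{(F+\gamma M_s)^2}.
\end{align*}
These formulas are obtained by straightforward differentiation (treating $F/(F+\gamma M_s)$ via the quotient rule for the last two, and using that the factor $F(1-E/K)$ carries the dependence on $E$ for the first).

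Next I would justify that the signs follow from the bounds on the trajectory. By Lemma \ref{lem:1403}, as soon as $E(0)<K$ we have $E(t)<K$ for every $t\in[0,T]$, so the factor $1-E/K$ is strictly positive on $[0,T]$. The same lemma gives $F(t)\geq F_2^\ast e^{-\delta_F t}>0$, since $F_2^\ast>0$ under assumption \eqref{eq:cond prop}. Finally $M_s\geq 0$ by a standard barrier argument on the third equation of \eqref{pb:sterile} (the control $u$ is nonnegative and $M_s(0)=0$), so $F+\gamma M_s>0$ and the denominators above do not vanish. Plugging these sign informations into the three expressions yields the announced inequalities: the first derivative is strictly negative because of the $F^2/(F+\gamma M_s)$ factor, the second is strictly positive because both $(1-E/K)$ and $F(F+2\gamma M_s)$ are positive, and the third is strictly negative because of the minus sign in front of a positive quantity.

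I do not expect any genuine obstacle here, since the result boils down to a sign analysis of three explicit algebraic expressions once the a priori bounds of Lemma \ref{lem:1403} are available. The only point worth mentioning is that the strict inequality $E<K$ (not merely $\leq$) is crucial for the second and third derivatives, and this strictness is precisely what Lemma \ref{lem:1403} provides via its contradiction argument on the maximum.
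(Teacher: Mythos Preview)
Your proof is correct and follows essentially the same route as the paper: compute the three partial derivatives explicitly and read off their signs using the bounds $E<K$, $F>0$ and $M_s\geq 0$ coming from Lemma~\ref{lem:1403} (and positivity). Your write-up is in fact slightly more detailed than the paper's, which simply records the three derivative formulas and states the resulting inequalities.
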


Let us argue by contradiction, considering $u^*$ a solution of Problem \eqref{pb_TIS} and $(E^*,F^*,M_s^*)$ the associated trajectory. If $\int_0^T u^*(t)\, dt<C$, then one has necessarily $\xi=0$ or equivalently $\lambda^*=0$. We will reach a contradiction by showing that one has $p_3^*<0$ on $(0,T)$. Indeed, if $p_3^*<0$ on $(0,T)$, then one has necessarily $u^*=\overline{U}$ on $(0,T)$ according to \eqref{metz-strasb0712} and since $\lambda^*=0$. But $\overline{U}$ is not feasible according to condition \eqref{cond:barUT}, yielding a contradiction.

Let us show that $p_3^*<0$ on $(0,T)$. To this aim, we introduce
$$
A:t\mapsto \frac{\partial f_E}{\partial E}(E^*(t),F^*(t),M_s^*(t))-(\tau_E+\delta_E)\quad \text{and}\quad B=\frac{\partial f_E}{\partial F}(E^*(t),F^*(t),M_s^*(t)).
$$
Then, the two first equations of the adjoint system \eqref{metz-strasb0711} read
$$
\left\{\begin{array}{ll}
(p_1^*(t)e^{\int^t A})' = -\nu\beta_F p_2^*(t) e^{\int^t A} & \text{on }[0,T],\\
(p_2^*(t)e^{-\delta_F t})' = -B(t) p_1^*(t) e^{-\delta_F t} & \text{on }[0,T].
\end{array}
\right.
$$
Let $v:t\mapsto p_1^*(t)e^{\int^t A}$. The last system becomes
$$
\left\{\begin{array}{ll}
v' = -\nu\beta_F p_2^*(t) e^{\int^t A} & \text{on }[0,T],\\
(p_2^*(t)e^{-\delta_F t})' = -B(t) v e^{-\int^t A}e^{-\delta_F t} & \text{on }[0,T].
\end{array}
\right.
$$
Therefore, $v$ solves the ODE
$$
(e^{-\int^t(\delta_F+A)}v'(t))'=\nu\beta_FBe^{-\int^t(\delta_F+A)}v(t).
$$
Notice that
$$
v(T)=p_1^*(T)e^{\int^TA}>0\quad \text{and}\quad v'(T)=((p_1^*)'(T)+A(T)p_1(T))e^{\int^TA}=-\nu\beta_Fp_2^*(T)e^{\int^TA}<0.
$$
Introduce the change of variable $s=\int_0^t e^{\int^z(\delta_F+A)}\, dz$, $\tilde T=\int_0^T e^{-\int^z(\delta_F+A)}\, dz$ and the function $w$ defined on $[0,\tilde T]$ by $w(s)=v(t)$. Then, the function $w$ satisfies the Cauchy system
$$
\left\{\begin{array}{ll}
w''(s) = \nu\beta_F B e^{-2\int^t(\delta_F+A)}w(s) & s\in [0,\tilde T],\\
w(\tilde T)=v(T)>0 ,& \\
w'(\tilde T)=v'(T)e^{-\int^T(\delta_F+A)}<0 ,& 
\end{array}
\right.
$$
where $t$ has to be understood as a function of $s$ in this system. We then infer that $w''(T)>0$ and therefore $w$ is convex in a neighborhood of $\tilde T$. Since it is also positive and decreasing according to the terminal conditions, it follows that $w$ cannot vanish on $[0,\tilde T]$. We successively infer that $v$ is positive on $[0,T]$ and so is $p_1^*$.  

Recall that $p_3^*$ satisfies the equation
$$
-(p_3^*)'=\frac{\partial f_E}{\partial M_s}(E^*,F^*,M_s^*)p_1^*-\delta_sp_3^*
$$
and therefore 
$$
(p_3^*e^{-\delta_s t})'=-e^{-\delta_s t}\frac{\partial f_E}{\partial M_s}(E^*,F^*,M_s^*)p_1^*>0
$$
according to the reasoning above and Lemma \ref{strasb:1720}. It follows that $t\mapsto p_3^*(t)e^{-\delta_s t}$ increases on $[0,T]$ and vanishes at $T$ only. Thus, $p_3^*<0$ on $[0,T)$ and we are done.

\paragraph{Structure of the control.}
We have shown that  $\lambda^*\neq0$ and therefore, $\lambda^*<0$. According to the first order optimality conditions (and \eqref{metz-strasb0712} in particular), since $p_3^*(T)=0$ and $p_3^*$ is continuous, we infer that $u^*=0$ in a neighborhood of $T$.

\begin{proof}[Proof of Lemma \ref{strasb:1720}]
Using Lemma \ref{lem:1403}, $E(t)<K$ for all $t\in[0,T]$. 
After some computations, we thus obtain
\begin{equation*}
\left\{\begin{array}{l}
\frac{\partial f_E}{\partial E}(E,F,M_s)=\frac{-\beta_EF^2}{K(F+\gamma M_s)}<0,\\\noalign{\smallskip}
\frac{\partial f_E}{\partial F}(E,F,M_s)=\left(1-\frac{E}{K}\right)\frac{\beta_EF^2+2\gamma\beta_EFM_s}{(F+\gamma M_s)^2}>0,\\\noalign{\smallskip}
\frac{\partial f_E}{\partial M_s}(E,F,M_s)=\frac{-\gamma \beta_EF^2\left(1-\frac{E}{K}\right)}{(F+\gamma M_s)^2}<0.
\end{array}\right.
\end{equation*}
\end{proof}

\section{Proof of Proposition \ref{prop:wolbachiaExistsOC}}
This proof is very similar to the one of Proposition \ref{prop:existsterile}. For the sake of completeness but to avoid redundancies, we only provide a sketch of proof. Let us consider a minimizing sequence $(u_n)_{n\in \mathbb{N}}$ and denote by $(E_u^n,F_u^n,{E_i}^n,F_i^n)_{n\in \mathbb{N}}$ the corresponding solution to System \eqref{pb:Wolbachia}.

\begin{itemize}
\item Since $(u_n)_{n\in \mathbb{N}}$ is uniformly bounded in $L^\infty(0,T)$, it converges to some element $u^*\in {\mathcal U}_{C,\overline{U}}$.
\item {\it The 4-tuple $(E_u^n,F_u^n,{E_i}^n,F_i^n)_{n\in \mathbb{N}}$ is bounded in $H^1(0,T)$.} First, observe that a standard barrier argument shows that each element of this tuple is positive. Moreover, given $n\in \mathbb{N}$, one has $E_i^n(t)+E_u^n(t)<K$. Indeed, one has $E_i^n(0)+E_u^n(0)\leq K$. Assuming by contradiction that $\max_{t\in [0,T]}E_i^n(t)+E_u^n(t)> K$, let $t_0$ be the first time in $(0,T)$ such that $E_i^n(t_0)+E_u^n(t_0)=K$. Then, one computes
$$
\frac{d}{dt} E_u^n(t_0) \leq  - \big(\tau_E+\delta_E\big) K<0\quad \text{and}\quad \frac{d}{dt} E_i^n(t_0) \leq - \big(\tau_E+\delta_E\big)K<0 ,
$$
yielding to a contradiction. It follows that $(E_u^n)_{n\in \mathbb{N}}$ and $(E_i^n)_{n\in \mathbb{N}}$ are bounded in $C^0([0,T])$. Since
$$
F_u^n(t)=K\left(\frac{\nu\beta_F}{\delta_F}-\frac{\nu\beta_F}{b}\right)e^{-\delta_Ft}+\nu\beta_F\int_0^t e^{-\delta_F(t-s)}E_u^n(s)\,
$$
and
$$
F_i^n(t)=\int_0^t e^{-\delta \delta_F(t-s)}(\nu\beta_FE_i^n(s)+u(s))\, ds,
$$
it follows that $(F_u^n)_{n\in \mathbb{N}}$ and $(F_i^n)_{n\in \mathbb{N}}$ are also bounded in $C^0([0,T])$.
Since $\frac{d}{dt} F_u^n \geq - \delta_{F} F_u^n$, a Gronwall inequality yields $ F_u^n(t)\geq  K\left(\frac{\nu\beta_F}{\delta_F}-\frac{\nu\beta_F}{b}\right)e^{-\delta_F t}$.
Therefore, $F_u^n+F_i^n$ cannot vanish on $[0,T]$ and we finally get the expected conclusion. 
\item By boundedness of $(E_u^n,F_u^n,{E_i}^n,F_i^n)_{n\in \mathbb{N}}$ in $(H^1(0,T))^4$, there exists $(E_u^*,F_u^*,E_i^*,F_i^*)\in (H^1(0,T))^4$ such that $(E_u^n,F_u^n,{E_i}^n,F_i^n)_{n\in \mathbb{N}}$ converges up to a subsequence to  $(E_u^*,F_u^*,E_i^*,F_i^*)\in (H^1(0,T))^4$, weakly in $H^1(0,T)$ and strongly in $L^2(0,T)$. Standard variational arguments show not only that $(E_u^*,F_u^*,E_i^*,F_i^*)$ satisfies \eqref{pb:Wolbachia} associated to the control function $u^*$, but also that $(J(u_n))_{n\in \mathbb{N}}$ converges to $J(u^*)$. The existence follows.
\end{itemize}

\end{document}